\documentclass{article}

\newif\ifarxiv
\arxivtrue

\usepackage{graphicx}
\usepackage{amsmath,amssymb,amsthm,bm}
\usepackage[a4paper]{geometry}
\usepackage{algorithm,algorithmic}
\usepackage[colorlinks=false,pdfborder={0 0 0}]{hyperref}
\usepackage{authblk}
\usepackage{color}
\usepackage{makecell}

\graphicspath{{fig/}}
\DeclareGraphicsExtensions{.mps,.pdf,.eps,.png}

\newenvironment{keywords}{\medskip\textbf{Keywords:}}{}
\newenvironment{AMS}{\medskip\textbf{AMS subject classifications (2020).}}{}

\newtheorem{theorem}{Theorem}

\newtheorem{remark}{Remark}
\newtheorem{lemma}{Lemma}

\theoremstyle{plain}

\DeclareMathOperator{\diag}{diag}

\newcommand{\fro}{\mathsf F}

\newcommand*{\trans}{^{\top}}

\newcommand*{\itrans}{^{-\top}}

\newcommand*{\pinv}{^{\dagger}}

\newcommand*{\abs}[1]{\bigl\lvert#1\bigr\rvert}
\newcommand*{\absbig}[1]{\biggl\lvert#1\biggr\rvert}
\newcommand*{\normbig}[1]{\left\Vert#1\right\rVert}
\newcommand*{\norm}[1]{\bigl\Vert#1\bigr\rVert}
\newcommand*{\normF}[1]{\bigl\Vert#1\bigr\rVert_{\fro}}

\def\adots{\mathinner{\mkern2mu\raise1pt\hbox{.}\mkern2mu
    \raise4pt\hbox{.}\mkern2mu\raise7pt\hbox{.}\mkern1mu}}

\newcommand*{\macheps}{\bm u}
\newcommand*{\machepsh}{\bm u_{h}}

\newcommand*{\epscholh}{\varepsilon^h_{\mathrm{chol}}}

\newcommand*{\epsmh}{\varepsilon^h_{\mathrm{M}}}

\newcommand*{\epseigh}{\varepsilon_{\mathrm{eig-tol}}}
\newcommand*{\epseigmh}{\varepsilon^h_{\mathrm{eig}}}

\newcommand*{\epsV}{\varepsilon_{\mathrm{V}}}

\newcommand*{\epsSVD}{\varepsilon_{\mathrm{SVD}}}
\newcommand*{\epstrsm}{\varepsilon_{\mathrm{trsm}}}
\newcommand*{\epsU}{\varepsilon_{\mathrm U}}

\newcommand*{\epssqrt}{\varepsilon_{\mathrm{sqrt}}}

\newcommand*{\working}{\mathtt{working}}
\newcommand*{\high}{\mathtt{higher}}

\newcommand*{\bigO}{O}

\title{Mixed precision thin SVD algorithms based on the Gram matrix}
\author[1]{Erin Carson}
\author[1]{Yuxin Ma}
\author[2]{Meiyue Shao}
\affil[1]{Department of Numerical Mathematics, Faculty of Mathematics and Physics, Charles University, Sokolovsk\'{a} 49/83, 186 75 Praha 8, Czechia

\texttt{Email: carson@karlin.mff.cuni.cz, yuxin.ma@matfyz.cuni.cz}}
\affil[2]{School of Data Science and MOE Key Laboratory for Computational Physical Sciences, Fudan University, Shanghai 200433, China

\texttt{Email: myshao@fudan.edu.cn}}

\begin{document}
\maketitle

\begin{abstract}
In this work, we present a mixed precision algorithm that leverages the Gram matrix and Jacobi methods to compute the singular value decomposition (SVD) of tall-and-skinny matrices.
By constructing the Gram matrix in higher precision and coupling it with a Jacobi algorithm, our theoretical analysis and numerical experiments both indicate that the singular values computed by this mixed precision thin SVD algorithm attain high relative accuracy.
In practice, our mixed precision thin SVD algorithm yields speedups of over \(10\times\) on a single CPU and about \(2\times\) on distributed memory systems when compared with traditional thin SVD methods.

\begin{keywords}
singular value decomposition, mixed precision algorithm, Gram matrix, high relative accuracy
\end{keywords}

\begin{AMS}
65F15, 65F25, 65G50, 65Y05, 65Y20
\end{AMS}
\end{abstract}

\section{Introduction}
\label{sec:introduction}
Given a full-rank, tall-and-skinny matrix \(A \in \mathbb{R}^{m \times n}\) with \(m \gg n\), this work focuses on computing its singular value decomposition (SVD),
\begin{equation} \label{prob:svd}
    A = U \Sigma V\trans,
\end{equation}
where \(U \in \mathbb{R}^{m \times n}\) is an orthonormal matrix whose columns are the left singular vectors, \(V \in \mathbb{R}^{n \times n}\) is an orthogonal matrix whose columns are the right singular vectors, and \(\Sigma \in \mathbb{R}^{n \times n}\) is a diagonal matrix whose entries are the singular values.
This problem is encountered in many applications, such as principal component analysis and linear regression.

To compute the SVD of tall-and-skinny matrices, one typically first applies a thin QR factorization, \(A = QR\), which compresses the tall-and-skinny matrix \(A\) into an \(n \times n\) square matrix \(R\).
An SVD algorithm is then applied to \(R\), yielding \(R = U_R \Sigma V\trans\), from which the singular values and right singular vectors of \(A\) follow directly.
The left singular vectors of \(A\) can be obtained by \(U = Q U_R\).
For computing the SVD of the small matrix, the three most commonly used algorithms are the QR SVD, the divide-and-conquer (D\&C) SVD, and the one-sided Jacobi SVD.
As shown in~\cite{DV1992}, when computing the singular values of \(A = BD\) with the one-sided Jacobi SVD algorithm, one obtains high relative accuracy. Specifically, the accuracy of the computed singular values depends only on \(\kappa(B)\), rather than on \(\kappa(A)\), where the columns of \(B\) have unit norms and \(D\) is a diagonal matrix whose entries are the column \(2\)-norms of \(A\).
In contrast, for the QR SVD algorithm and the D\&C SVD algorithm, the accuracy of the results is governed by \(\kappa(A)\).

On modern computer architectures, communication is  increasingly costly relative to arithmetic operations.
Here, communication refers both to data transfer between different levels of the local memory hierarchy and to synchronization among parallel processing units.
In this QR-based thin SVD method, the QR factorization, which scales with \(m\), is the most expensive step in terms of both communication and computation, and for numerical stability, the initial reduction typically employs either Householder QR or tall-and-skinny QR.
In this paper, our objective is to develop an efficient thin SVD algorithm that guarantees high relative accuracy.
Therefore, we do not take into account randomized QR factorization algorithms, such as~\cite{GT2024}.
However, compared to the Cholesky QR algorithm, Householder QR and tall-and-skinny QR algorithms incur significantly higher communication costs when orthogonalizing a tall-and-skinny matrix.

The limitation of the Cholesky QR algorithm lies in its numerical instability~\cite{CLRT2022,YNYF2015,YTD2015}.
In~\cite{YTD2015}, the authors proposed a mixed precision Cholesky QR algorithm that improves numerical stability by computing both the Gram matrix and the Cholesky factorization in a precision higher than the working precision, referred to as higher precision.
Even though current hardware does not always natively support higher precision, the cost of computation is decreasing relative to communication on modern architectures.
Consequently, numerical experiments demonstrated that on a recent NVIDIA GPU, the mixed precision Cholesky QR algorithm was only \(1.4\times\) slower than the fixed precision one.
On the other hand,~\cite{M2025} investigated a truncated thin SVD algorithm based on the Gram matrix for the low-rank approximation problem.
This algorithm does not depend on higher precision but significantly improves the traditional error bounds involving \(\kappa^2(A)\), demonstrating that this Gram-based SVD algorithm is less unstable than it might initially appear.

Motivated by these ideas, we develop a mixed precision thin SVD algorithm that similarly depends on forming the Gram matrix in higher precision.
By appropriately combining it with either a two-sided Jacobi or a one-sided Jacobi SVD algorithm, our mixed precision thin SVD algorithm can deliver up to more than a \(10\times\) speedup on CPUs and roughly a \(2\times\) speedup on distributed memory systems compared to the QR-based thin SVD methods while still maintaining high relative accuracy.

In recent years, mixed precision algorithms have attracted much attention, especially in numerical linear algebra and high performance computing~\cite{Survey2021,HM2022}.
In addressing the SVD problem, several other studies have employed mixed-precision techniques. The works 
\cite{GMS2025,SXZ2026} mainly utilized lower precision (i.e., a precision lower than the working precision) to accelerate computations while maintaining accuracy.
In~\cite{ZTW2026}, the authors developed a mixed-precision one-sided Jacobi SVD algorithm that incorporates three precision levels (lower, working, and higher) in order to enhance the accuracy of the computed singular values.
In contrast to these approaches, our goal is to exploit higher precision to guarantee high relative accuracy while still achieving significant performance improvements.

The remainder of this paper is organized as follows.
Section~\ref{sec:algorithm} introduces our Gram-based mixed precision thin SVD algorithm.
The high relative accuracy of this mixed precision algorithm is demonstrated in Section~\ref{sec:accuracy}.
In Section~\ref{sec:experiments}, we report numerical experiments that illustrate both the accuracy and the efficiency of our mixed precision thin SVD algorithms.

A few remarks regarding notation are necessary before proceeding.
In general, we use uppercase Roman letters such as \(A\) and \(Q\) to represent matrices, and \(A(i, :~)\) to refer to the \(i\)-th row of a matrix.
We write $\norm{\cdot}$ for the spectral norm and $\normF{\cdot}$ for the Frobenius norm.
The symbols \(\lambda_{\max}\) and \(\lambda_{\min}\) denote the largest and smallest eigenvalues, respectively.
We use \(\sigma_i(\cdot)\) to denote the singular values, ordered as \(\sigma_1(\cdot)\geq\sigma_2(\cdot)\geq\dotsi\geq\sigma_n(\cdot)\).
The condition number of a matrix is written as \(\kappa(\cdot) = \sigma_{\max}(\cdot)/\sigma_{\min}(\cdot)\).
We denote the unit roundoff in the working precision by \(\macheps\), and in a higher precision by \(\machepsh\).
Computed quantities are indicated with a hat.
For instance, $\hat{Q}$ denotes the finite-precision approximation of the orthonormal matrix $Q$.

\section{Mixed precision thin SVD algorithm}
\label{sec:algorithm}

In this section, we introduce a mixed precision algorithm for solving Problem~\eqref{prob:svd} based on the following steps:
\begin{enumerate}
    \item Compute the Gram matrix \(M = A\trans A\). \label{step:Gram}
    \item Compute the spectral decomposition of \(M\), that is, \(M = V \Sigma^2 V\trans\), where \(V\) is an orthogonal matrix whose columns are the eigenvectors of \(M\) and are also the right singular vectors of \(A\), and \(\Sigma\) is a diagonal matrix whose entries are the singular values of \(A\). \label{step:eigen}
    \item Compute the left singular vectors \(U =  AV\Sigma^{-1}\).%
    \footnote{Note that we only consider the situation where \(A\) has full column rank. In this case, both \(A\pinv\) and \(\Sigma^{-1}\) are well-defined.}
\end{enumerate}
Unlike the thin SVD algorithm that relies on QR factorization, the main computational cost of the above procedure lies in forming the Gram matrix.
As a result, it can benefit from the higher efficiency of matrix multiplication compared to QR factorization.
However, a drawback of this approach is the instability introduced by the Gram matrix—it squares the condition number of the problem.
A natural remedy is to carry out Steps~\ref{step:Gram} and~\ref{step:eigen}, which involve the Gram matrix, in higher precision.
We outline this mixed precision algorithm, which relies on a higher-precision Gram matrix, in Algorithm~\ref{alg:mpthinsvd}.
Since matrix multiplication is significantly faster than QR factorization, we can still expect an overall speedup of Algorithm~\ref{alg:mpthinsvd} over the QR-based thin SVD method for \(m \gg n\), even when the matrix multiplication is performed in higher precision, as will be demonstrated in Section~\ref{sec:experiments}.

\begin{algorithm}[!tb]
\begin{algorithmic}[1]
    \caption{Mixed precision thin SVD algorithm based on the Gram matrix  \label{alg:mpthinsvd}}
    \REQUIRE
     A matrix \(A \in \mathbb R^{m\times n}\), the working precision \(\macheps\) and the higher precision \(\machepsh\leq \macheps\).
    \ENSURE
    Computed orthonormal matrix \(U\in \mathbb R^{m\times n}\), orthogonal matrix \(V\in \mathbb R^{n\times n}\), and diagonal matrix \(\Sigma\in \mathbb R^{n\times n}\) approximating \(A = U\Sigma V\trans\).
    
    \STATE Cast \(A\) to the higher precision, i.e., \(A_h\gets \high(A)\).
    \STATE Compute the Gram matrix \(M_h = A_h\trans A_h\) at precision \(\machepsh\).
    \STATE Compute the spectral decomposition of the symmetric positive definite matrix \(M_h\) at precision \(\machepsh\), i.e., \(M_h = V_h\Sigma_h^2V_h\trans\), where the orthogonal matrix \(V_h\) and the diagonal matrix \(\Sigma_h\) contain the eigenvectors and the corresponding eigenvalues, respectively. \label{line:eigen}
    \STATE Cast \(\Sigma_h\) back to the working precision to get \(\Sigma\), i.e. \(\Sigma\gets\working(\Sigma_h)\).
    \STATE Cast \(V_h\) back to the working precision, i.e., \(V\gets \working(V_h)\).
    \STATE Obtain \(U\) from \(U\gets A(V\Sigma^{-1})\) at working precision \(\macheps\). \label{line:computeU}
\end{algorithmic}
\end{algorithm}

\subsection{Eigensolver used in Line~\ref{line:eigen} of Algorithm~\ref{alg:mpthinsvd}}
In Line~\ref{line:eigen} of Algorithm~\ref{alg:mpthinsvd}, the spectral decomposition can be computed using various methods, such as the QR algorithm, the D\&C algorithm, or the two-sided Jacobi algorithm.
It should be noted that the computational cost of this spectral decomposition is relatively low, as it does not depend on \(m\).
Therefore, we can employ a more stable and accurate method in Line~\ref{line:eigen}, even if it is somewhat more computationally expensive.
Compared with the QR algorithm and the D\&C algorithm, the eigenvalues computed by the two-sided Jacobi algorithm exhibit high relative accuracy.
As will be discussed in Section~\ref{sec:accuracy}, the singular values computed by Algorithm~\ref{alg:mpthinsvd} when employing the two-sided Jacobi algorithm as the eigensolver can achieve high relative accuracy comparable to that of the one-sided Jacobi SVD algorithm~\cite{DV2008a}.
Therefore, the two-sided Jacobi algorithm is a suitable choice for Line~\ref{line:eigen} in Algorithm~\ref{alg:mpthinsvd}.

Since the two-sided Jacobi algorithm is not available in LAPACK, one option is to employ the one-sided Jacobi SVD algorithm proposed in~\cite{DV2008a,DV2008b} instead, as the eigensolver takes a symmetric positive definite matrix as input.
We additionally present an alternative approach to compute the spectral decomposition of the Gram matrix, summarized in Algorithm~\ref{alg:eigen}, which also preserves the high relative accuracy of the results produced by Algorithm~\ref{alg:mpthinsvd}.

\begin{algorithm}[!tb]
\begin{algorithmic}[1]
    \caption{Mixed precision symmetric eigensolver for the Gram matrix  \label{alg:eigen}}
    \REQUIRE
     The working precision \(\macheps\), the higher precision \(\machepsh\leq \macheps\), and a Gram matrix \(M_h \in \mathbb R^{n\times n}\) stored at higher precision \(\machepsh\).
    \ENSURE
    Computed orthogonal matrix \(V\in \mathbb R^{n\times n}\) and diagonal matrix \(\Sigma_h\in \mathbb R^{n\times n}\) approximating \(M_h = V\Sigma^2V\trans\), where \(V\) and \(\Sigma\) are at working precision \(\macheps\).

    \STATE Compute the Cholesky factorization of the Gram matrix \(M_h\), i.e., \(M_h = R_h\trans R_h\) at precision \(\machepsh\), where \(R_h\) is upper triangular.
    \STATE Cast \(R_h\) back to the working precision, i.e., \(R\gets \working(R_h)\).
    \STATE Compute the SVD of \(R\) at precision \(\macheps\), i.e., \(R = U_R\Sigma V\trans\), where \(U_R\) and \(V\) are orthogonal matrices, and \(\Sigma\) is a diagonal matrix.
\end{algorithmic}
\end{algorithm}

\section{Accuracy}
\label{sec:accuracy}
In this section, we will prove the backward stability, the high relative accuracy of the computed singular values, and the orthogonality of the computed singular vectors for Algorithm~\ref{alg:mpthinsvd}.
According to the standard rounding error analysis, each step of Algorithm~\ref{alg:mpthinsvd} satisfies that, respectively,
\begin{alignat}{2}
    \hat{M}_h &= A\trans A + \Delta M, \quad &&\abs{\Delta M} \leq \epsmh\abs{A}\trans\abs{A}, \label{eq:ATA}\\
    \tilde{V}_h\trans(\hat{M}_h+\Delta\hat{M}_h)\tilde{V}_h &= \hat{S}_h + \Delta S,\quad 
    && \norm{D^{-1}\Delta \hat{M}_hD^{-1}} \leq \epseigmh \norm{B}^2 \label{eq:eig} \\
    &  &&\text{and}\quad\abs{\Delta S}_{ij} \leq \epseigh(\tilde{\Sigma}_h)_{ii} (\tilde{\Sigma}_h)_{jj}, \label{eq:eig-2} \\
    & && \text{with}\quad (\tilde{\Sigma}_h)^2 =\hat{S}_h,\quad \forall i, j\leq n, \label{eq:Sh=Sigma2} \\
    \hat{\sigma}_{i} &= (\tilde{\Sigma}_h)_{ii}(1+\delta \sigma_i), \quad
    &&\abs{\delta \sigma_i} \leq \epssqrt, \label{eq:sqrt}\\
    \hat{V} &= \tilde{V}_h + \Delta V, \quad &&\abs{\Delta V}\leq \epsV \abs{\tilde{V}_h}, \label{eq:castV} \\
    \hat{U} &= A\hat{V}\hat{\Sigma}^{-1}+\Delta U, \quad &&\abs{\Delta 
    U}\leq \epsU\abs{A}\abs{\hat{V}}\hat{\Sigma}^{-1}, \label{eq:AV-0} 
\end{alignat}
where \(A = BD\), \(\hat{S}_h\) is a diagonal matrix, and \(\tilde{V}_h\) is an orthogonal matrix.
Here, \(\epsmh\) and \(\epseigmh\) are parameters that depend on \(\machepsh\), while \(\epseigh\), \(\epssqrt\), \(\epsV\), and \(\epsU\) are functions of \(\macheps\).
Moreover, \(\epseigmh\) and \(\epseigh\) are determined by the eigensolver employed in Line~\ref{line:eigen} of Algorithm~\ref{alg:mpthinsvd}.
For example, according to~\cite[Theorem~3.1]{DV1992}, when the two-sided Jacobi algorithm is used as the eigensolver, we have \(\epseigmh=\bigO(\machepsh)\) and \(\epseigh=\bigO(\macheps)\), where this latter quantity represents the tolerance selected for the stopping criterion.
For readability, we summarize these parameters in Table~\ref{tab:notation}.

\begin{table}[!tb]
\centering
\caption{Notations of rounding error analysis.
Here \(A\in\mathbb{R}^{m\times n}\); \(S\), \(\Sigma\in\mathbb{R}^{n\times n}\)
are diagonal matrices; \(\tilde{V}\in\mathbb{R}^{n\times n}\) is an orthogonal matrix satisfying~\eqref{eq:eig}; \(\hat{V}\in\mathbb{R}^{n\times n}\) denotes the quantity in working precision that is obtained by converting the higher precision result \(\hat{V_h}\) back to the working precision format.}
\label{tab:notation}
\footnotesize
\begin{tabular}{ccccc}
\hline Notation & Sources & Definitions
& \makecell[c]{Upper bound\\ on \(\varepsilon\)} & References \\
\hline
\(\epsmh\)  & \(M_h = A\trans A\) & \eqref{eq:ATA}
& \(\bigO(\machepsh)\) & \cite[Section 3.5]{H2002}\\
\(\epseigmh\)  & \(M_h = V_hSV_h\trans\) & \eqref{eq:eig}
& \(\bigO(\machepsh)\) & \cite[Theorem~3.1]{DV1992}\\
\(\epseigh\)  & \(M_h = V_hSV_h\trans\) & \eqref{eq:eig-2}
& \(\bigO(\macheps)\) & \cite[Theorem~3.1]{DV1992}\\
\(\epssqrt\)  & \(\Sigma^2=S\) & \eqref{eq:sqrt}
& \(\bigO(\macheps)\)    & \cite[Section 1.1]{H2002}\\
\(\epsV\)  & the distance between \(\hat{V}\) and \(\tilde{V}_h\) & \eqref{eq:castV}
& \(\bigO(\macheps)\)    & \cite[Theorem~3.1]{DV1992}\\
\(\epsU\)  & \(U = AV\Sigma^{-1}\) & \eqref{eq:AV-0}
& \(\bigO(\macheps)\) & \cite[Section 3.5]{H2002} \\
\hline
\end{tabular}
\end{table}

Combining~\eqref{eq:castV} and~\eqref{eq:AV-0}, we have
\begin{equation} \label{eq:AV}
    \hat{U} = A\tilde{V}_h\hat{\Sigma}^{-1}+\Delta \tilde{U}, \quad \abs{\Delta \tilde{U}}\leq \bigl(\epsV + \epsU(1+\epsV)\bigr)\abs{A}\abs{\tilde{V}_h}\hat{\Sigma}^{-1}.
\end{equation}

In Section~\ref{subsec:accuracy-algo1}, we first demonstrate that Algorithm~\ref{alg:mpthinsvd}, when equipped with an eigensolver (in Line~\ref{line:eigen}) satisfying~\eqref{eq:eig} and~\eqref{eq:eig-2}, is numerically stable and produces singular values with high relative accuracy, comparable to those obtained by the one-sided Jacobi SVD algorithm in~\cite{DV1992}.
Subsequently, we show that the eigensolver described in Algorithm~\ref{alg:eigen} indeed satisfies the requirements~\eqref{eq:eig} and~\eqref{eq:eig-2} in Section~\ref{subsec:accuracy-eigen}.

\subsection{Accuracy of Algorithm~\ref{alg:mpthinsvd}}
\label{subsec:accuracy-algo1}
In Theorem~\ref{thm:algo1}, we give the backward stability, high relative accuracy of the computed singular values, and the orthogonality of the computed singular vectors of Algorithm~\ref{alg:mpthinsvd} with a specific eigensolver (in Line~\ref{line:eigen}) satisfying~\eqref{eq:eig} and~\eqref{eq:eig-2}.

\begin{theorem} \label{thm:algo1}
    Assume that \(\hat{\sigma}_i\) with \(i\leq n\) are the computed singular values of \(A = BD\) computed by Algorithm~\ref{alg:mpthinsvd}, where the columns of \(B\) have unit norms and \(D\) is a diagonal matrix containing the column norms of \(A\).
    Also, assume that~\eqref{eq:ATA}--\eqref{eq:AV-0} are satisfied with given \(\epsmh\), \(\epseigmh\), \(\epseigh\), \(\epssqrt\), \(\epsV\), and \(\epsU\).
    Then
    \begin{equation} \label{eq:backward}
        \hat{U}\hat{\Sigma}\tilde{V}_h\trans = A+\Delta A, \quad \norm{\Delta A(i, :)}\leq \sqrt{n}\bigl(\epsV + \epsU(1+\epsV)\bigr)\norm{A(i, :)}.
    \end{equation}
    If it further holds that
    \begin{equation} \label{eq:thm:assump}
        \varepsilon_1 := 2\epseigh + 2\bigl(n^2\epsmh + \epseigmh\bigr)\kappa^2(B) 
        + 4n\sqrt{n}\bigl(\epsV + \epsU(1+\epsV)\bigr)\kappa(B) \leq \frac{1}{2},
    \end{equation}
    then
    \begin{equation} \label{eq:thm:highaccuracy_sv}
        \frac{\abs{\hat{\sigma}_i-\sigma_i(A)}}{\sigma_i(A)}
        \leq 2\epssqrt + 2\epseigh + 4\bigl(n^2\epsmh+\epseigmh\bigr)\kappa^2(B)
    \end{equation}
    and
    \begin{equation} \label{eq:thm:orthU1}
    \normF{\hat{U}\trans \hat{U}-I}
    \leq \frac{2\,\sqrt{n}\epssqrt + n\varepsilon_1}{1-2\epssqrt}.
\end{equation}
\end{theorem}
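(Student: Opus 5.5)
The plan has three parts, one for each claim, and all three rest on \eqref{eq:AV}.

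\textbf{Backward error \eqref{eq:backward}.} Multiply \eqref{eq:AV} on the right by \(\hat{\Sigma}\tilde{V}_h\trans\) and use \(\tilde{V}_h\trans\tilde{V}_h=I\). This gives \(\hat{U}\hat{\Sigma}\tilde{V}_h\trans = A + \Delta A\) with \(\Delta A = \Delta\tilde{U}\hat{\Sigma}\tilde{V}_h\trans\).

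Row-wise, \(\abs{\Delta A(i,:)}\leq \bigl(\epsV+\epsU(1+\epsV)\bigr)\abs{A(i,:)}\abs{\tilde{V}_h}\) up to the orthogonal factor \(\tilde{V}_h\trans\). Taking 2-norms gives \(\norm{\abs{A(i,:)}\abs{\tilde{V}_h}}\leq \norm{A(i,:)}\,\norm{\abs{\tilde{V}_h}}\). Finally, \(\norm{\abs{\tilde{V}_h}}\leq\normF{\tilde{V}_h}=\sqrt n\), which yields the factor \(\sqrt n\).

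\textbf{Relative accuracy \eqref{eq:thm:highaccuracy_sv}.} Write \(\tilde{V}_h\trans(A\trans A + E)\tilde{V}_h = \hat{S}_h+\Delta S\), where \(E=\Delta M+\Delta\hat{M}_h\). Scale with \(D\).

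From \eqref{eq:ATA}, \(\abs{D^{-1}\Delta M D^{-1}}\leq\epsmh\abs{B}\trans\abs{B}\). The entries of \(\abs{B}\trans\abs{B}\) are at most \(1\) because the columns of \(B\) have unit norm, so \(\norm{D^{-1}\Delta MD^{-1}}\leq n\epsmh\). Together with \eqref{eq:eig}, and \(\norm{B}^2\leq n\), we get \(\norm{D^{-1}ED^{-1}}\leq (n^2\epsmh+\epseigmh)\norm{B}^2/\ldots\); concretely, \(\norm{D^{-1}ED^{-1}}\leq \eta\,\lambda_{\min}(B\trans B)\) with \(\eta=(n^2\epsmh+\epseigmh)\kappa^2(B)\) up to the constant bookkeeping.

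This gives the two-sided bound \(A\trans A+E = D(B\trans B + D^{-1}ED^{-1})D\), and the perturbation is a relative perturbation of \(B\trans B\) of size \(\eta\). By Ostrowski/Demmel--Veselić multiplicative perturbation, \(\lambda_i(A\trans A+E)=\sigma_i(A)^2(1+\theta_i)\) with \(\abs{\theta_i}\leq\eta\).

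Next, \(\hat{S}_h+\Delta S\) is similar to \(A\trans A+E\). Using \eqref{eq:eig-2}, \(\Delta S = \tilde{\Sigma}_hF\tilde{\Sigma}_h\) with \(\norm{F}\leq n\epseigh\); I would refine this to the stated \(2\epseigh\) by Gershgorin/diagonal-dominance, or by accepting the \(n\) factor into \(\epseigh\). Ostrowski again gives \(\lambda_i(\hat S_h+\Delta S)=(\tilde\Sigma_h)_{ii}^2(1+\phi_i)\). Hence \((\tilde\Sigma_h)_{ii}^2=\sigma_i^2(1+\theta_i)/(1+\phi_i)\), matching eigenvalues in sorted order.

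Taking square roots, \(\abs{\sqrt{1+x}-1}\leq\abs{x}\), and composing with \eqref{eq:sqrt} gives \eqref{eq:thm:highaccuracy_sv}. The factors of 2 come from \(1/(1-\varepsilon)\leq 2\) under \eqref{eq:thm:assump}.

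\textbf{Orthogonality \eqref{eq:thm:orthU1}.} From \eqref{eq:AV}, write \(\hat U = (A\tilde V_h+\Delta\tilde U\hat\Sigma)\hat\Sigma^{-1}\). Then \(\hat U\trans\hat U = \hat\Sigma^{-1}\bigl(\tilde V_h\trans A\trans A\tilde V_h + G\bigr)\hat\Sigma^{-1}\), where \(G\) collects the cross terms. By the eigen relation, \(\tilde V_h\trans A\trans A\tilde V_h = \hat S_h+\Delta S-\tilde V_h\trans E\tilde V_h\).

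Dividing by \(\hat\Sigma\) on both sides, and using \(\hat\sigma_i = (\tilde\Sigma_h)_{ii}(1+\delta\sigma_i)\), the diagonal part contributes \(\abs{(1+\delta\sigma_i)^{-2}-1}\lesssim 2\epssqrt/(1-2\epssqrt)\) per entry, hence \(2\sqrt n\epssqrt\) in the Frobenius norm. Each remaining term must be bounded in the form \(\tilde\Sigma_h\,X\,\tilde\Sigma_h\) with \(\normF{X}\lesssim n\varepsilon_1\):
\begin{itemize}
\item The \(\Delta S\) term is immediate from \eqref{eq:eig-2}.
\item The \(E\) term uses \(\norm{\tilde\Sigma_h^{-1}\tilde V_h\trans D}\leq \norm{B^{-1}}\)-type estimates, i.e.\ \(\norm{D\tilde V_h\tilde\Sigma_h^{-1}}\lesssim \sigma_{\min}(B)^{-1}\) via the accuracy just proved. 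This gives the \(\kappa^2(B)\) term.
\item The cross term \(\tilde V_h\trans A\trans\Delta\tilde U\hat\Sigma\) gives \(\kappa(B)\) times \(n\sqrt n(\epsV+\epsU(1+\epsV))\).
\end{itemize}

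The main obstacle is exactly this scaling step: showing \(\norm{\tilde\Sigma_h^{-1}\tilde V_h\trans D}\) is bounded by \(O(1/\sigma_{\min}(B))\) rather than \(\kappa(A)\). I would derive it from \(\tilde\Sigma_h^{-1}\tilde V_h\trans(A\trans A+E)\tilde V_h\tilde\Sigma_h^{-1}=I+F\), whence \(D\tilde V_h\tilde\Sigma_h^{-2}\tilde V_h\trans D\approx(B\trans B+D^{-1}ED^{-1})^{-1}\). Its norm is at most \(2/\lambda_{\min}(B\trans B)\) under \eqref{eq:thm:assump}, which is where the condition \(\varepsilon_1\leq 1/2\) is used.
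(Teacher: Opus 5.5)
Your backward-error step matches the paper's; dropping the orthogonal factor before using $\norm{\abs{\tilde V_h}}\le\normF{\tilde V_h}=\sqrt n$ is the clean way to get the $\sqrt n$.

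For the singular values your route differs. You use two Ostrowski-type multiplicative steps: one in the $D$-scaling, which absorbs $E=\Delta M+\Delta\hat M_h$, and one in the $\tilde\Sigma_h$-scaling, which absorbs $\Delta S$. The paper instead puts everything into $\Delta E=\tilde V_h\trans A\trans A\tilde V_h-\hat S_h$. It then transfers the bounds on $\Delta M,\Delta\hat M_h$ to the $\tilde\Sigma_h$-scaling via $A\pinv=D^{-1}B\pinv$ and the a priori estimate $\norm{A\tilde V_h(:,i)}\le\sqrt2(\tilde\Sigma_h)_{ii}$, and applies \cite[Theorem~2.3]{DV1992} once. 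Your $E$-step is sound and sharper. Since $\norm{\abs{B}}\le\normF{B}=\sqrt n$ and $\norm{B}\ge1$, you get $\norm{D^{-1}ED^{-1}}\le n\epsmh+\epseigmh\norm{B}^2\le\eta\,\lambda_{\min}(B\trans B)$ with $\eta=(n\epsmh+\epseigmh)\kappa^2(B)$. This is a genuinely normwise bound that needs no column estimate.

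The weak point is $\epseigh$. You correctly note that \eqref{eq:eig-2} only gives $\norm{F}\le n\epseigh$. However, no Gershgorin or diagonal-dominance argument can recover $2\epseigh$, because \eqref{eq:thm:highaccuracy_sv} is false under the entrywise hypothesis. Take $\tilde V_h=\hat V=\hat\Sigma=\hat S_h=I$, $\Delta S=\epsilon(\mathbf{1}\mathbf{1}\trans-I)$, $\Delta M=\Delta\hat M_h=0$, any $A$ with $A\trans A=I+\Delta S$, and $\hat U=A$. Then \eqref{eq:ATA}--\eqref{eq:AV-0} hold with $\epseigh=\epsilon$ and all other parameters zero, and \eqref{eq:thm:assump} reads $2\epsilon\le\frac12$. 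But $\sigma_1(A)=\sqrt{1+(n-1)\epsilon}$, so $\hat\sigma_1=1$ has relative error about $(n-1)\epsilon/2>2\epsilon$ once $n\ge6$. The paper's proof reaches $\epseigh$ only by asserting $\norm{\tilde\Sigma_h^{-1}\Delta E\tilde\Sigma_h^{-1}}\le\epseigh+\cdots$ from what are entrywise bounds. So your fallback is the right one. Either the conclusion carries $n\epseigh$, together with a smallness condition on $n\epseigh$ that \eqref{eq:thm:assump} does not supply, or \eqref{eq:eig-2} must be read as a normwise bound on $\tilde\Sigma_h^{-1}\Delta S\tilde\Sigma_h^{-1}$.

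In the orthogonality part the same factor enters where it is avoidable, and here your step is a real gap. Your identity $D\tilde V_h\tilde\Sigma_h^{-1}(I+F)^{-1}\tilde\Sigma_h^{-1}\tilde V_h\trans D=(B\trans B+D^{-1}ED^{-1})^{-1}$ is correct. It, not ``the accuracy just proved'', is the real justification, since eigenvalue accuracy says nothing about $\tilde V_h$. But it only yields $\norm{D\tilde V_h\tilde\Sigma_h^{-1}}^2\le(1+n\epseigh)/\bigl((1-\eta)\lambda_{\min}(B\trans B)\bigr)$. Hence ``at most $2/\lambda_{\min}(B\trans B)$ under \eqref{eq:thm:assump}'' does not follow, because that hypothesis only gives $\epseigh\le\frac14$.

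The fix is to work column by column, as the paper does.
\begin{enumerate}
\item Since $D\tilde V_h(:,i)=B\pinv A\tilde V_h(:,i)$, you get $\abs{(\tilde V_h\trans E\tilde V_h)_{ii}}\le\eta\norm{A\tilde V_h(:,i)}^2$.
\item The quantity $\norm{A\tilde V_h(:,i)}^2=(\tilde V_h\trans A\trans A\tilde V_h)_{ii}$ involves only $(\Delta S)_{ii}\le\epseigh(\hat S_h)_{ii}$.
\item Together these give $\norm{A\tilde V_h(:,i)}\le\sqrt2(\tilde\Sigma_h)_{ii}$ and $\norm{D\tilde V_h(:,i)}\le\sqrt2\norm{B\pinv}(\tilde\Sigma_h)_{ii}$.
\end{enumerate}
Because \eqref{eq:thm:orthU1} is assembled from entrywise bounds, these column norms suffice for the $E$ term and for the cross term. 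For the cross term, set $\Delta X:=\Delta\tilde U\hat\Sigma$ and use $\abs{\Delta X(:,j)}\le(\epsV+\epsU(1+\epsV))\abs{B}\,\abs{D\tilde V_h(:,j)}$. They also suffice for the quadratic term $\Delta X\trans\Delta X$, which your list omits. No factor $n$ on $\epseigh$ then appears. So for orthogonality, unlike for the singular values, the stated dependence on $\epseigh$ is attainable.
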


\begin{proof}
In the following, we will prove~\eqref{eq:backward}, \eqref{eq:thm:highaccuracy_sv}, and~\eqref{eq:thm:orthU1}, respectively.

\paragraph{Proof of rowwise backward stability}
First, we aim to prove the rowwise backward stability, i.e., \eqref{eq:backward}, which is directly derived from~\eqref{eq:AV} since \(\Delta A = \Delta \tilde{U}\hat{\Sigma}\tilde{V}_h\trans\) satisfies
\begin{equation*}
\begin{split}
\norm{\Delta A(i, :)} &\leq \bigl(\epsV + \epsU(1+\epsV)\bigr)\norm{A(i, :)} \cdot\normbig{\abs{\tilde{V}_h}\abs{\tilde{V}_h}\trans}\\
&\leq \sqrt{n}\bigl(\epsV + \epsU(1+\epsV)\bigr)\norm{A(i, :)}
\end{split}
\end{equation*}
by~\cite[Lemma~6.6]{H2002} and the fact that \(\tilde{V}_h\) is orthogonal.

\paragraph{Proof of high relative accuracy of computed singular values}
Then our aim is to prove the high relative accuracy of computed singular values, i.e., \eqref{eq:thm:highaccuracy_sv}.

Substituting \(\hat{M}_h\) involved in~\eqref{eq:eig} by~\eqref{eq:ATA}, \eqref{eq:eig} can be rewritten as
\begin{equation} \label{eq:VhAAVh}
    \tilde{V}_h\trans A\trans A\tilde{V}_h = \hat{S}_h + \Delta S + \bigl(- \tilde{V}_h\trans \Delta M\tilde{V}_h\bigr) + \bigl(-\tilde{V}_h\trans \Delta\hat{M}_h\tilde{V}_h\bigr).
\end{equation}
We then bound \(-\tilde{V}_h\trans \Delta M\tilde{V}_h\) and \(-\tilde{V}_h\trans \Delta\hat{M}_h\tilde{V}_h\) separately.

From~\eqref{eq:ATA} and \(A=BD\), we have
\begin{equation} \label{eq:VhTDMVh}
\begin{split}
    \abs{\tilde{V}_h\trans \Delta M\tilde{V}_h}_{ij}
    &= \abs{\tilde{V}_h\trans (A\pinv A)\trans \Delta M (A\pinv A)\tilde{V}_h}_{ij} \\
    &\leq \norm{A\tilde{V}_h(:, i)}\cdot\normbig{\abs{A\pinv}\trans\abs{\Delta M}\abs{A\pinv}}\cdot\norm{A\tilde{V}_h(:, j)} \\
    &\leq \norm{A\tilde{V}_h(:, i)}\cdot\normbig{\abs{B\pinv}\trans D^{-1}\abs{\Delta M}D^{-1}\abs{B\pinv}}\cdot\norm{A\tilde{V}_h(:, j)} \\
    &\leq \epsmh\norm{A\tilde{V}_h(:, i)}\cdot\normbig{\abs{B\pinv}\trans \abs{B}\trans \abs{B}\abs{B\pinv}}\cdot\norm{A\tilde{V}_h(:, j)} \\
    &\leq n^2\epsmh\kappa^2(B) \norm{A\tilde{V}_h(:, i)}\cdot \norm{A\tilde{V}_h(:, j)}.
\end{split}
\end{equation}
Similarly, by~\eqref{eq:eig}, \(\abs{\tilde{V}_h\trans \Delta \hat{M}_h\tilde{V}_h}_{ij}\) is bounded by
\begin{equation} \label{eq:VhTDMhVh-0}
\begin{split}
    \abs{\tilde{V}_h\trans \Delta \hat{M}_h\tilde{V}_h}_{ij}
    &\leq \norm{A\tilde{V}_h(:, i)}\cdot\norm{(B\pinv)\trans D^{-1} \Delta \hat{M}_h D^{-1}B\pinv}\cdot\norm{A\tilde{V}_h(:, j)} \\
    &\leq \epseigmh \norm{A\tilde{V}_h(:, i)}\cdot\norm{B}^2\cdot\norm{B\pinv}^2\cdot\norm{A\tilde{V}_h(:, j)} \\
    &= \epseigmh \kappa^2(B) \norm{A\tilde{V}_h(:, i)}\cdot\norm{A\tilde{V}_h(:, j)}.
\end{split}
\end{equation}
Combining~\eqref{eq:VhAAVh} with~\eqref{eq:VhTDMVh} and~\eqref{eq:VhTDMhVh-0}, \(\norm{A\tilde{V}_h(:, i)}\), involved in the right-hand sides of~\eqref{eq:VhTDMVh} and~\eqref{eq:VhTDMhVh-0}, can be bounded by
\begin{equation*}
\begin{split}
    \norm{A\tilde{V}_h(:, i)}^2 &= \bigl(\tilde{V}_h\trans A\trans A\tilde{V}_h\bigr)_{ii} \\
    &\leq (\hat{S}_h)_{ii}
    + \abs{\tilde{V}_h\trans \Delta M\tilde{V}_h}_{ii}
    + \abs{\tilde{V}_h\trans \Delta \hat{M}_h\tilde{V}_h}_{ii} \\
    &\leq (\hat{S}_h)_{ii} + \bigl(n^2\epsmh + \epseigmh\bigr)\kappa(B)^2 \norm{A\tilde{V}_h(:, i)}^2,
\end{split}
\end{equation*}
which implies that
\begin{equation} \label{eq:norm-AVi}
    \norm{A\tilde{V}_h(:, i)} \leq \sqrt{\frac{(\hat{S}_h)_{ii}}{1-\bigl(n^2\epsmh + \epseigmh\bigr)\kappa(B)^2}}
    \leq \sqrt{2}\,(\tilde{\Sigma}_h)_{ii}.
\end{equation}

Let \(\Delta E = \tilde{V}_h\trans A\trans A\tilde{V}_h - \hat{S}_h\).
We further obtain
\begin{equation*}
    \norm{\tilde{\Sigma}_h^{-1}\Delta E\tilde{\Sigma}_h^{-1}}\leq \epseigh + 2\bigl(n^2\epsmh+\epseigmh\bigr)\kappa^2(B).
\end{equation*}
Applying~\cite[Theorem 2.3]{DV1992} to \(\hat{S}_h + \Delta E\) with \(\hat{S}_h = \tilde{\Sigma}_h^2\) and
\[
\tilde{V}_h\trans A\trans A\tilde{V}_h = \hat{S}_h + \Delta E = \tilde{\Sigma}_h\bigl(I+ \tilde{\Sigma}_h^{-1}\Delta E\tilde{\Sigma}_h^{-1}\bigr)\tilde{\Sigma}_h,
\]
we have
\begin{equation} \label{eq:bound-lambdaXTX-Sh}
    \absbig{\frac{\lambda_i(A\trans A) - (\hat{S}_h)_{ii}}{(\hat{S}_h)_{ii}}} 
    = \absbig{\frac{\lambda_i(\tilde{V}_h\trans A\trans A\tilde{V}_h) - (\hat{S}_h)_{ii}}{(\hat{S}_h)_{ii}}}
    \leq \epseigh + 2\bigl(n^2\epsmh+\epseigmh\bigr)\kappa^2(B).
\end{equation}
Combining~\eqref{eq:bound-lambdaXTX-Sh} with
\begin{equation*}
\begin{split}
    \absbig{\frac{\lambda_i(A\trans A) - (\hat{S}_h)_{ii}}{(\hat{S}_h)_{ii}}}
    &= \absbig{\frac{\sigma^2_i(A) - (\tilde{\Sigma}_h)^2_{ii}}{(\tilde{\Sigma}_h)^2_{ii}}} \\
    &= \absbig{\frac{\sigma_i(A) - (\tilde{\Sigma}_h)_{ii}}{(\tilde{\Sigma}_h)_{ii}}}
    \cdot\frac{\sigma_i(A) + (\tilde{\Sigma}_h)_{ii}}{(\tilde{\Sigma}_h)_{ii}} \\
    &\geq \absbig{\frac{\sigma_i(A) - (\tilde{\Sigma}_h)_{ii}}{(\tilde{\Sigma}_h)_{ii}}},
\end{split}
\end{equation*}
we further have
\begin{equation} \label{eq:proof-highaccuracy:gap-sigmaA-Sigmah}
    \absbig{\frac{\sigma_i(A) - (\tilde{\Sigma}_h)_{ii}}{(\tilde{\Sigma}_h)_{ii}}}
    \leq \epseigh + 2\bigl(n^2\epsmh+\epseigmh\bigr)\kappa^2(B).
\end{equation}
Furthermore, from the assumption~\eqref{eq:thm:assump}, \eqref{eq:proof-highaccuracy:gap-sigmaA-Sigmah} implies
\begin{equation*}
    \frac{\sigma_i(A)}{(\tilde{\Sigma}_h)_{ii}} \geq 1 - \absbig{\frac{\sigma_i(A) - (\tilde{\Sigma}_h)_{ii}}{(\tilde{\Sigma}_h)_{ii}}}
    \geq 1- \biggl(\epseigh + 2\bigl(n^2\epsmh+\epseigmh\bigr)\kappa^2(B)\biggr)
    \geq \frac{1}{2}
\end{equation*}
and then
\begin{equation} \label{eq:proof-highaccuracy:bound-Sigmah/sigmaA}
    \frac{(\tilde{\Sigma}_h)_{ii}}{\sigma_i(A)}\leq 2.
\end{equation}
Combining~\eqref{eq:proof-highaccuracy:gap-sigmaA-Sigmah} with~\eqref{eq:proof-highaccuracy:bound-Sigmah/sigmaA} and~\eqref{eq:sqrt}, we can conclude the proof of~\eqref{eq:thm:highaccuracy_sv} since
\begin{equation*}
\begin{split}
    \absbig{\frac{\sigma_i(A) - \hat{\sigma}_i}{\sigma_i(A)}}
    &= \absbig{\frac{\sigma_i(A) - (\tilde{\Sigma}_h)_{ii}(1+\delta\sigma_i)}{\sigma_i(A)}} \\
    &\leq \absbig{\frac{\sigma_i(A) - (\tilde{\Sigma}_h)_{ii}}{\sigma_i(A)}}
    + \abs{\delta\sigma_i}\frac{(\tilde{\Sigma}_h)_{ii}}{\sigma_i(A)} \\
    &\leq \absbig{\frac{\sigma_i(A) - (\tilde{\Sigma}_h)_{ii}}{(\tilde{\Sigma}_h)_{ii}}}
    \frac{(\tilde{\Sigma}_h)_{ii}}{\sigma_i(A)} + \abs{\delta\sigma_i}\frac{(\tilde{\Sigma}_h)_{ii}}{\sigma_i(A)} \\
    &\leq 2\epssqrt + 2\epseigh + 4\bigl(n^2\epsmh+\epseigmh\bigr)\kappa^2(B).
\end{split}
\end{equation*}

\paragraph{Proof of orthonormality of \(\hat{U}\)}
It remains to show that \(\hat{U}\) is close to orthonormal.
From~\eqref{eq:AV}, we directly have
\begin{equation} \label{eq:AV-1}
    X := \hat{U}\hat{\Sigma} = A\tilde{V}_h+\Delta X, \quad \abs{\Delta X}\leq \bigl(\epsV + \epsU(1+\epsV)\bigr)\abs{A}\abs{\tilde{V}_h}.
\end{equation}
Furthermore, \(\abs{X\trans X}\) can be bounded as follows:
\begin{equation}  \label{eq:XTX-0}
\begin{split}
    \abs{X\trans X}_{ij}
    &= \abs{(A\tilde{V}_h + \Delta X)\trans (A\tilde{V}_h + \Delta X)}_{ij} \\
    &\leq \abs{\tilde{V}_h\trans A\trans A\tilde{V}_h}_{ij} + \abs{\tilde{V}_h\trans A\trans\Delta X}_{ij} + \abs{\tilde{V}_h\trans A\trans\Delta X}_{ji} + \abs{\Delta X\trans \Delta X}_{ij} \\
    &\leq \abs{\tilde{V}_h\trans (\hat{M}_h + \Delta \hat{M}_h)\tilde{V}_h}_{ij}
    + \abs{\tilde{V}_h\trans \Delta M\tilde{V}_h}_{ij} 
    + \abs{\tilde{V}_h\trans \Delta \hat{M}_h\tilde{V}_h}_{ij} \\
    &\quad+ \abs{\tilde{V}_h\trans A\trans\Delta X}_{ij}
    + \abs{\tilde{V}_h\trans A\trans\Delta X}_{ji}
    + \abs{\Delta X\trans \Delta X}_{ij} \\
    &\leq \abs{\hat{S}_h}_{ij} + \abs{\Delta S}_{ij}
    + \abs{\tilde{V}_h\trans \Delta M\tilde{V}_h}_{ij}
    + \abs{\tilde{V}_h\trans \Delta \hat{M}_h\tilde{V}_h}_{ij} \\
    &\quad+ \abs{\tilde{V}_h\trans A\trans\Delta X}_{ij}
    + \abs{\tilde{V}_h\trans A\trans\Delta X}_{ji}
    + \abs{\Delta X\trans \Delta X}_{ij}.
\end{split}
\end{equation}
Note that \(\abs{\Delta S}\), \(\tilde{V}_h\trans \Delta M\tilde{V}_h\), and \(\tilde{V}_h\trans \Delta \hat{M}_h\tilde{V}_h\) have been bounded in~\eqref{eq:eig-2}, \eqref{eq:VhTDMVh}, and~\eqref{eq:VhTDMhVh-0}, respectively.
Then we will treat other terms involved in the bound of~\eqref{eq:XTX-0} separately.

From~\eqref{eq:AV-1} and noticing the fact \(\tilde{V}_h\trans A\pinv A\tilde{V}_h = I\), we have
\begin{equation} \label{eq:VhTATDX-0}
\begin{split}
    \abs{\tilde{V}_h\trans A\trans \Delta X}_{ij}
    &\leq \bigl(\epsV + \epsU(1+\epsV)\bigr) \bigl(\abs{A\tilde{V}_h}\trans \abs{A}\abs{\tilde{V}_h}\bigr)_{ij} \\
    &= \bigl(\epsV + \epsU(1+\epsV)\bigr) \bigl(\abs{A\tilde{V}_h}\trans \abs{A}\abs{\tilde{V}_h}\cdot\tilde{V}_h\trans A\pinv A\tilde{V}_h\bigr)_{ij} \\
    &\leq \bigl(\epsV + \epsU(1+\epsV)\bigr) \norm{A\tilde{V}_h(:, i)}\cdot \norm{\abs{A}\abs{\tilde{V}_h}\tilde{V}_h\trans A\pinv}\cdot \norm{A\tilde{V}_h(:, j)}.
\end{split}
\end{equation}
Notice that
\[
A\pinv = (A\trans A)^{-1}A\trans
= (DB\trans BD)^{-1}DB\trans
= D^{-1}(B\trans B)^{-1}B\trans
= D^{-1} B\pinv,
\]
which implies that \(\abs{A}\abs{\tilde{V}_h}\tilde{V}_h\trans A\pinv = \abs{B}D\abs{\tilde{V}_h}\tilde{V}_h\trans D^{-1}B\pinv\).
By noticing that \(\tilde{V}_h\) is orthogonal, it follows that
\begin{equation} \label{eq:AVVA-0}
\begin{split}
    \normbig{\abs{A}\abs{\tilde{V}_h}\tilde{V}_h\trans A\pinv}
    &\leq \sqrt{n}\norm{B}\cdot \normbig{D\abs{\tilde{V}_h}\tilde{V}_h\trans D^{-1}}\cdot \norm{B\pinv} \\
    &\leq \sqrt{n}\norm{B}\cdot \normbig{D\abs{\tilde{V}_h}\abs{\tilde{V}_h}\trans D^{-1}}\cdot \norm{B\pinv}. 
\end{split}
\end{equation}
In the right-hand side of~\eqref{eq:AVVA-0}, note that \(\abs{\tilde{V}_h}\abs{\tilde{V}_h}\trans\) is a symmetric matrix, then we have
\begin{equation*}
    \normbig{D\abs{\tilde{V}_h}\abs{\tilde{V}_h}\trans D^{-1}}
    = \lambda_1(D\abs{\tilde{V}_h}\abs{\tilde{V}_h}\trans D^{-1})
    = \lambda_1(\abs{\tilde{V}_h}\abs{\tilde{V}_h}\trans)
    = \norm{\abs{\tilde{V}_h}\abs{\tilde{V}_h}\trans}.
\end{equation*}
Together with the fact that \(\tilde{V}_h\) is orthogonal, using~\cite[Lemma~6.6]{H2002} we have
\begin{equation} \label{eq:norm-DVhVhTDinv}
    \normbig{D\abs{\tilde{V}_h}\abs{\tilde{V}_h}\trans D^{-1}}
    = \normbig{\abs{\tilde{V}_h}\abs{\tilde{V}_h}\trans}
    \leq n\norm{\tilde{V}_h}^2 = n.
\end{equation}
Combining~\eqref{eq:AVVA-0} with~\eqref{eq:norm-DVhVhTDinv} and \(\kappa(B) = \norm{B}\norm{B\pinv}\), we have
\begin{equation} \label{eq:AVVA}
    \normbig{\abs{A}\abs{\tilde{V}_h}\tilde{V}_h\trans A\pinv}
    \leq n\sqrt{n}\,\kappa(B).
\end{equation}
Using~\eqref{eq:AVVA} to substitute \(n\sqrt{n}\kappa(B)\) for \(\normbig{\abs{A}\abs{\tilde{V}_h}\tilde{V}_h\trans A\pinv}\) in the bound of~\eqref{eq:VhTATDX-0}, it follows that
\begin{equation} \label{eq:VhTATDX}
\begin{split}
    \abs{\tilde{V}_h\trans A\trans \Delta X}_{ij}
    &\leq n\sqrt{n}\bigl(\epsV + \epsU(1+\epsV)\bigr)\kappa(B) \norm{A\tilde{V}_h(:, i)} \cdot \norm{A\tilde{V}_h(:, j)}.
\end{split}
\end{equation}
Similarly, \(\abs{\Delta X\trans\Delta X}\) can be bounded as follows:
\begin{equation} \label{eq:DXTDX}
    \abs{\Delta X\trans\Delta X}_{ij} \leq n^3\bigl(\epsV + \epsU(1+\epsV)\bigr)^2\kappa^2(B) \norm{A\tilde{V}_h(:, i)} \cdot \norm{A\tilde{V}_h(:, j)}.
\end{equation}

Substituting the terms \(\abs{\Delta S}_{ij}\), \(\abs{\tilde{V}_h\trans \Delta M\tilde{V}_h}_{ij}\), \(\abs{\tilde{V}_h\trans \Delta \hat{M}_h\tilde{V}_h}_{ij}\), \(\abs{\tilde{V}_h\trans A\trans\Delta X}_{ij}\), and \(\abs{\Delta X\trans \Delta X}_{ij}\) involved in~\eqref{eq:XTX-0} by~\eqref{eq:eig}, \eqref{eq:VhTDMVh}, \eqref{eq:VhTDMhVh-0}, \eqref{eq:VhTATDX}, and~\eqref{eq:DXTDX}, we have, together with~\eqref{eq:norm-AVi},
\begin{equation}  \label{eq:XTX}
\begin{split}
    \abs{X\trans X}_{ij}
    &\leq (\hat{S}_h)_{ij}
    + 2\biggl(\epseigh + \bigl(n^2\epsmh + \epseigmh\bigr) \kappa^2(B) \\
    &\quad+ 2n\sqrt{n}\bigl(\epsV + \epsU(1+\epsV)\bigr)\kappa(B)\biggr)(\tilde{\Sigma}_h)_{ii}(\tilde{\Sigma}_h)_{jj} \\
    &\leq (\hat{S}_h)_{ij} + \varepsilon_1(\tilde{\Sigma}_h)_{ii}(\tilde{\Sigma}_h)_{jj}.
\end{split}
\end{equation}
Using the definition of \(X\), i.e., \(X = \hat{U}\hat{\Sigma}\), and~\eqref{eq:sqrt}, then we have
\begin{equation*}
    \abs{\hat{U}\trans \hat{U}}_{ij}
    = \frac{\abs{X\trans X}_{ij}}{\hat{\sigma}_{i} \hat{\sigma}_{j}}
    \leq \frac{\abs{X\trans X}_{ij}}{(\tilde{\Sigma}_h)_{ii}(\tilde{\Sigma}_h)_{jj}(1-\epssqrt)^2}
    \leq \frac{1}{1-2\epssqrt}\biggl(\frac{(\hat{S}_h)_{ij}}{(\tilde{\Sigma}_h)_{ii}(\tilde{\Sigma}_h)_{jj}}
    + \varepsilon_1\biggr),
\end{equation*}
which draws the conclusion~\eqref{eq:thm:orthU1} by noticing that \(\hat{S}_h\) is a diagonal matrix satisfying~\eqref{eq:Sh=Sigma2}.
\end{proof}

\begin{remark}  \label{remark:eigen-jacobi}
    According to~\cite{H2002} and~\cite{DV1992}, \(\epsmh = \bigO(\machepsh)\), \(\epseigh = \bigO(\machepsh)\), \(\epssqrt = \bigO(\macheps)\), \(\epseigmh = \bigO(\machepsh)\), \(\epsV = \bigO(\macheps)\), and~\(\epsU = \bigO(\macheps)\).
    Based on Lemma~\ref{lem:algo2} and Theorem~\ref{thm:algo1}, we have the following conclusions for Algorithm~\ref{alg:mpthinsvd} using the two-sided Jacobi algorithm, i.e.,~\cite[Algorithm~3.1]{DV1992}, in~Line~\ref{line:eigen}:
    \begin{equation*}
        \hat{U}\hat{\Sigma}\tilde{V}_h\trans = A+\Delta A, \quad \norm{\Delta A(i, :)}\leq \bigO(\macheps)\norm{A(i, :)}.
    \end{equation*}
    If it further holds that
    \begin{equation*}
        \bigO(\machepsh)\kappa^2(B) \leq \frac{1}{2},
    \end{equation*}
    then
    \begin{equation*}
        \frac{\abs{\hat{\sigma}_i-\sigma_i(A)}}{\sigma_i(A)}
        \leq \bigO(\macheps) + \bigO(\machepsh)\kappa^2(B)
    \end{equation*}
    and
    \begin{equation*}
    \normF{\hat{U}\trans \hat{U}-I}
    \leq \bigO(\macheps) + \bigO(\machepsh)\kappa^2(B).
    \end{equation*}
\end{remark}

\subsection{Accuracy of the eigensolver in Algorithm~\ref{alg:eigen}}
\label{subsec:accuracy-eigen}
According to the standard rounding error analysis, each step of Algorithm~\ref{alg:eigen} satisfies that, respectively,
\begin{alignat}{2}
    \hat{M}_h + \Delta \hat{M}_h &= \hat{R}_h\trans \hat{R}_h, \quad &&\abs{\Delta \hat{M}_h}\leq \epscholh\abs{\hat{R}_h\trans}\abs{\hat{R}_h}, \label{eq:chol} \\
    \hat{R} &= \hat{R}_h + \Delta \hat{R}_h, \quad &&\abs{\Delta \hat{R}_h}\leq \macheps \abs{\hat{R}_h}, \label{eq:castR} \\
    \hat{R} + \Delta \hat{R} &= \tilde{U}_R \hat{\Sigma}\tilde{V}\trans, \quad &&\norm{\Delta \hat{R}(:, i)} \leq \epsSVD \norm{\hat{R}(:, i)}. \label{eq:svd}
\end{alignat}
To remain consistent with Section~\ref{subsec:accuracy-algo1}, we use \(\hat{M}_h\) as the input to Algorithm~\ref{alg:eigen}.
Based on these assumptions, we show that Algorithm~\ref{alg:eigen} satisfies~\eqref{eq:eig} and~\eqref{eq:eig-2} in Theorem~\ref{thm:algo2}.

\begin{theorem} \label{thm:algo2}
    Assume that Algorithm~\ref{alg:eigen} satisfies~\eqref{eq:chol}--\eqref{eq:svd} with given \(\epscholh\) and \(\epsSVD\).
    Let \(\hat{R}_h = B_RD_R\), where the columns of \(B_R\) have unit norms and \(D_R\) is a positive diagonal matrix whose entries are the column norms of \(\hat{R}_h\).
    If 
    \begin{equation} \label{eq:thm-algo2:assump}
        (\macheps + (1+\macheps)\epsSVD) \kappa(B_R) \leq \frac{1}{2},
    \end{equation}
    then the results of Algorithm~\ref{alg:eigen} satisfy \(\tilde{V}\trans(\hat{M}_h+\Delta\hat{M}_h)\tilde{V} = \hat{\Sigma}^2 + \Delta S\), where \(\Delta\hat{M}_h\) and \(\Delta S\) satisfy, respectively,
    \begin{align}
        \norm{D_R^{-1}\Delta \hat{M}_hD_R^{-1}} &\leq n\epscholh \norm{B_R}^2 \quad\text{and} \label{eq:thm-algo2:eig} \\
        \abs{\Delta S}_{ij} &\leq 6(\macheps + (1+\macheps)\epsSVD) \kappa(B_R) (\hat{\Sigma})_{ii} (\hat{\Sigma})_{jj},\quad \forall i, j\leq n.\label{eq:thm-algo2:eig-2}
    \end{align}
\end{theorem}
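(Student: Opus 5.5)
The plan is to treat the two claims separately: \eqref{eq:thm-algo2:eig} comes directly from the Cholesky error model, and \eqref{eq:thm-algo2:eig-2} from a perturbation argument for \(\hat{R}_h\) around the exact factorization \(\tilde{U}_R\hat{\Sigma}\tilde{V}\trans\). Throughout, \(\Delta\hat{M}_h\) is taken to be exactly the Cholesky backward error in~\eqref{eq:chol}.

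\textbf{Step 1: the bound \eqref{eq:thm-algo2:eig}.} By~\eqref{eq:chol} and \(\hat{R}_h = B_RD_R\),
\[
\abs{D_R^{-1}\Delta\hat{M}_hD_R^{-1}} \leq \epscholh\abs{B_R}\trans\abs{B_R}.
\]
Since \(\norm{\abs{B_R}} \leq \sqrt{n}\norm{B_R}\) (e.g.\ \(\norm{\abs{B_R}}\le\normF{B_R}\le\sqrt n\norm{B_R}\)), this gives \(\norm{D_R^{-1}\Delta\hat{M}_hD_R^{-1}}\leq n\epscholh\norm{B_R}^2\). The same argument shows that \(\hat{M}_h+\Delta\hat{M}_h=\hat{R}_h\trans\hat{R}_h\), which is what the second claim will use.

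\textbf{Step 2: write \(\hat{R}_h\) as an exact SVD plus a columnwise perturbation.} Combining~\eqref{eq:castR} and~\eqref{eq:svd} gives
\[
\hat{R}_h = \tilde{U}_R\hat{\Sigma}\tilde{V}\trans - E, \qquad E = \Delta\hat{R}+\Delta\hat{R}_h .
\]
Set \(\eta := \macheps + (1+\macheps)\epsSVD\). Since \(\norm{\hat{R}(:,i)}\leq(1+\macheps)\norm{\hat{R}_h(:,i)}\), we get \(\norm{E(:,i)}\leq \eta\norm{\hat{R}_h(:,i)}\). Hence \(E = FD_R\) where every column of \(F\) has norm at most \(\eta\). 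Then
\[
\hat{R}_h\tilde{V} = \tilde{U}_R\hat{\Sigma} - E\tilde{V},
\]
and expanding,
\[
\tilde{V}\trans\hat{R}_h\trans\hat{R}_h\tilde{V} = \hat{\Sigma}^2 + \Delta S,\qquad
\Delta S = -\hat{\Sigma}\tilde{U}_R\trans E\tilde{V} - (E\tilde{V})\trans\tilde{U}_R\hat{\Sigma} + \tilde{V}\trans E\trans E\tilde{V}.
\]

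\textbf{Step 3: bound the columns of \(E\tilde{V}\) relative to \(\hat{R}_h\tilde{V}\).} Mimicking~\eqref{eq:VhTDMVh}, insert \(\hat{R}_h^{-1}\hat{R}_h\):
\[
E\tilde{V} = FB_R^{-1}\,\hat{R}_h\tilde{V},\qquad\text{so}\qquad
\norm{E\tilde{V}(:,j)}\leq \norm{FB_R^{-1}}\,\norm{\hat{R}_h\tilde{V}(:,j)}.
\]
The key estimate is \(\norm{FB_R^{-1}}\lesssim\eta\kappa(B_R)\). This is where I expect the main obstacle. A crude column-norm bound only gives \(\norm{F}\leq\sqrt{n}\,\eta\), whereas \(\kappa(B_R)=\norm{B_R}\norm{B_R^{-1}}\) with \(\norm{B_R}\geq1\). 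To avoid a \(\sqrt{n}\) factor I would try to exploit \(\norm{B_R}\geq 1\) together with the structure of \(E\): \(\Delta\hat{R}_h\) is entrywise relative to \(\hat{R}_h\), and \(\Delta\hat{R}\) is columnwise relative. If a stray factor of \(n\) cannot be removed, I would absorb it into \(\eta\).

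\textbf{Step 4: close with a self-bounding argument.} Column \(j\) of \(\hat{R}_h\tilde{V}\) equals \(\tilde{U}_R\hat{\Sigma}(:,j) - E\tilde{V}(:,j)\), so
\[
\norm{\hat{R}_h\tilde{V}(:,j)} \leq \hat{\Sigma}_{jj} + \eta\kappa(B_R)\norm{\hat{R}_h\tilde{V}(:,j)}.
\]
Assumption~\eqref{eq:thm-algo2:assump} gives \(\eta\kappa(B_R)\leq\tfrac12\), hence \(\norm{\hat{R}_h\tilde{V}(:,j)}\leq 2\hat{\Sigma}_{jj}\) and therefore \(\norm{E\tilde{V}(:,j)}\leq 2\eta\kappa(B_R)\hat{\Sigma}_{jj}\). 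This is analogous to~\eqref{eq:norm-AVi}. Bounding \(\Delta S\) entrywise:
\begin{itemize}
\item each cross term contributes at most \(2\eta\kappa(B_R)\hat{\Sigma}_{ii}\hat{\Sigma}_{jj}\);
\item the quadratic term contributes at most \(4\eta^2\kappa^2(B_R)\hat{\Sigma}_{ii}\hat{\Sigma}_{jj}\leq 2\eta\kappa(B_R)\hat{\Sigma}_{ii}\hat{\Sigma}_{jj}\), again using \(\eta\kappa(B_R)\leq\tfrac12\).
\end{itemize}
Summing gives the constant \(6\) in~\eqref{eq:thm-algo2:eig-2}.
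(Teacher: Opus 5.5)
Your plan matches the paper's proof except in presentation. Step~1 is identical: the paper cites Higham's Lemma~6.6 for $\norm{\abs{B_R}}\le\sqrt n\,\norm{B_R}$. Step~2 produces the same three-term $\Delta S$. Your columnwise self-bounding in Step~4 is the paper's estimate $\norm{\Delta\tilde R_h(\hat R_h+\Delta\tilde R_h)^{-1}}\le x/(1-x)$, with $x=\norm{E\hat R_h^{-1}}=\norm{FB_R^{-1}}$, written one column at a time, and it gives the same count $2x+2x+4x^2\le 6x$.

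\textbf{The gap.} The gap is the one you flagged in Step~3, and it cannot be closed. With $\eta=\macheps+(1+\macheps)\epsSVD$, the estimate $\norm{FB_R^{-1}}\le\eta\,\kappa(B_R)$ is false under \eqref{eq:chol}--\eqref{eq:svd}. Here is a counterexample.
\begin{itemize}
\item Take $\hat M_h=\hat R_h=\hat R=I$, so $B_R=D_R=I$ and $\kappa(B_R)=1$. Take $\Delta\hat M_h=0$, $\Delta\hat R_h=0$, and $\Delta\hat R=\epsSVD\, e_1e\trans$ with $e=(1,\dots,1)\trans$.
\item Every column of $\Delta\hat R$ has norm $\epsSVD=\epsSVD\norm{\hat R(:,i)}$, so all hypotheses hold. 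Yet $\norm{FB_R^{-1}}=\norm{F}=\sqrt n\,\epsSVD$.
\item Here $\Delta S=I-\hat\Sigma^2$. Applying $I+\epsSVD\, e_1e\trans$ to the unit vector $(2^{-1/2},(2(n-1))^{-1/2},\dots,(2(n-1))^{-1/2})\trans$ gives $\hat\Sigma_{11}^2\ge 1+\epsSVD(1+\sqrt{n-1})$.
\item So $\abs{\Delta S}_{11}$ grows like $\sqrt n\,\epsSVD$, while the right-hand side of \eqref{eq:thm-algo2:eig-2} stays near $6\eta$. With, e.g., $\macheps\le\epsSVD$ and $\sqrt n\,\epsSVD\ll 1$, the stated inequality fails once $n$ is a few hundred.
\end{itemize}
The paper's proof has the same hole. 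It asserts $\norm{\Delta\tilde R_h\hat R_h^{-1}}\le\eta\,\kappa(B_R)$ in a single line. The columnwise bound only yields $\norm{\Delta\tilde R_h D_R^{-1}}\le\sqrt n\,\eta$, and that is sharp.

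\textbf{Why your rescues fail.} Neither proposed rescue can work. In the example $\norm{B_R}=1$, so there is no slack from $\norm{B_R}\ge1$. Also $\Delta\hat R_h=0$, so the entrywise structure is irrelevant; the columnwise SVD error alone saturates the $\sqrt n$. You also cannot absorb the factor into $\eta$, since $\eta$ is fixed by the statement.

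\textbf{What your argument does prove.} Your Steps~2--4, and the paper's argument, establish verbatim a corrected version with $x:=\sqrt n\,\eta\,\norm{B_R^{-1}}\le\sqrt n\,\eta\,\kappa(B_R)$. If $\sqrt n\,\eta\,\kappa(B_R)\le\frac12$, then $\abs{\Delta S}_{ij}\le 6\sqrt n\,\eta\,\kappa(B_R)\,\hat\Sigma_{ii}\hat\Sigma_{jj}$. The extra $\sqrt n$ only changes constants in Lemma~\ref{lem:algo2}: $\epseigh$ and the hypothesis on $\kappa(B)$ each gain a factor $\sqrt n$. It is invisible at the $\bigO(\macheps)\kappa(B)$ level of the subsequent remark.
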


\begin{proof}
    Substituting \(\hat{R}\) involved in~\eqref{eq:svd} by~\eqref{eq:castR}, we have
    \begin{equation} \label{eq:proof-thm-algo2:hatRh}
        \hat{R}_h + \Delta \tilde{R}_h = \tilde{U}_R \hat{\Sigma}\tilde{V}\trans, \quad \norm{\Delta \tilde{R}_h(:, i)} \leq (\macheps + (1+\macheps)\epsSVD) \norm{\hat{R}_h(:, i)}
    \end{equation}
    with \(\Delta \tilde{R}_h := \Delta \hat{R}_h + \Delta \hat{R}\).
    Together with~\eqref{eq:chol}, we obtain
    \begin{equation} \label{eq:proof-thm-algo2:DS}
    \begin{split}
        \hat{M}_h + \Delta \hat{M}_h &= (\tilde{U}_R \hat{\Sigma}\tilde{V}\trans - \Delta \tilde{R}_h)\trans (\tilde{U}_R \hat{\Sigma}\tilde{V}\trans - \Delta \tilde{R}_h) \\
        &= \tilde{V} \hat{\Sigma} \tilde{V}\trans - \Delta \tilde{R}_h\trans \tilde{U}_R \hat{\Sigma}\tilde{V}\trans - \tilde{V}\hat{\Sigma}\tilde{U}_R\trans \Delta \tilde{R}_h
        + \Delta \tilde{R}_h\trans \Delta \tilde{R}_h \\
        &= \tilde{V} \hat{\Sigma} \tilde{V}\trans + \tilde{V}\bigl(\underbrace{-\tilde{V}\trans\Delta \tilde{R}_h\trans \tilde{U}_R \hat{\Sigma} - \hat{\Sigma}\tilde{U}_R\trans \Delta \tilde{R}_h\tilde{V}
        + \tilde{V}\trans\Delta \tilde{R}_h\trans \Delta \tilde{R}_h\tilde{V}}_{=: \Delta S}\bigr)\tilde{V}\trans.
    \end{split}
    \end{equation}
    
    We will first prove~\eqref{eq:thm-algo2:eig}.
    From~\eqref{eq:chol}, \(\norm{D_R^{-1} \Delta \hat{M}_h D_R^{-1}}\) can be bounded by
    \begin{equation*}
        \abs{D_R^{-1} \Delta \hat{M}_h D_R^{-1}}
        \leq \epscholh \abs{B_R\trans}\abs{B_R}.
    \end{equation*}
    Together with the inequality derived from~\cite[Lemma 6.6]{H2002}:
    \begin{equation*}
        \normbig{\abs{B_R\trans}\abs{B_R}} = \normbig{\abs{B_R}}^2 \leq n\norm{B_R}^2,
    \end{equation*}
    we have~\eqref{eq:thm-algo2:eig}.
    
    Then we will bound \((\Delta S)_{ij}\).
    From~\eqref{eq:proof-thm-algo2:hatRh}, \(-\tilde{V}\trans\Delta \tilde{R}_h\trans \tilde{U}_R \hat{\Sigma}\) can be bounded as
    \begin{equation} \label{eq:proof-thm-algo2:VhDRhURSR}
        \begin{split}
            \abs{-\tilde{V}\trans\Delta \tilde{R}_h\trans \tilde{U}_R \hat{\Sigma}}_{ij}
            &= \abs{\tilde{V}\trans \tilde{V} \hat{\Sigma}\tilde{U}_R\trans(\hat{R}_h + \Delta \tilde{R}_h)\itrans\Delta \tilde{R}_h\trans \tilde{U}_R \hat{\Sigma}}_{ij} \\
            &\leq \norm{(\hat{R}_h + \Delta \tilde{R}_h)\itrans\Delta \tilde{R}_h\trans} (\hat{\Sigma})_{ii} (\hat{\Sigma})_{jj}\\
            &\leq \norm{\Delta \tilde{R}_h\tilde{R}_h^{-1}} \cdot \norm{(I+\Delta \tilde{R}_h\tilde{R}_h^{-1})^{-1}}(\hat{\Sigma})_{ii} (\hat{\Sigma})_{jj} \\
            &\leq \frac{\norm{\Delta \tilde{R}_h\tilde{R}_h^{-1}}}{1-\norm{\Delta \tilde{R}_h\tilde{R}_h^{-1}}} (\hat{\Sigma})_{ii} (\hat{\Sigma})_{jj}.
        \end{split}
    \end{equation}
    Also, by~\eqref{eq:proof-thm-algo2:hatRh}, \(\norm{\Delta \tilde{R}_h\tilde{R}_h^{-1}}\) can be bounded by
    \begin{equation*}
        \norm{\Delta \tilde{R}_h\tilde{R}_h^{-1}}
        \leq (\macheps + (1+\macheps)\epsSVD) \kappa(B_R),
    \end{equation*}
    which gives the bounds of \(\abs{-\tilde{V}\trans\Delta \tilde{R}_h\trans \tilde{U}_R \hat{\Sigma}}_{ij}\) and \(\abs{- \hat{\Sigma}\tilde{U}_R\trans \Delta \tilde{R}_h\tilde{V}}_{ij}\), i.e.,
    \begin{align}
        \abs{-\tilde{V}\trans\Delta \tilde{R}_h\trans \tilde{U}_R \hat{\Sigma}}_{ij}
        &\leq \frac{(\macheps + (1+\macheps)\epsSVD) \kappa(B_R)}{1-(\macheps + (1+\macheps)\epsSVD) \kappa(B_R)} (\hat{\Sigma})_{ii} (\hat{\Sigma})_{jj}, \label{eq:proof-thm-algo2:DS-1} \\
        \abs{- \hat{\Sigma}\tilde{U}_R\trans \Delta \tilde{R}_h\tilde{V}}_{ij}
        &\leq \frac{(\macheps + (1+\macheps)\epsSVD) \kappa(B_R)}{1-(\macheps + (1+\macheps)\epsSVD) \kappa(B_R)} (\hat{\Sigma})_{ii} (\hat{\Sigma})_{jj}. \label{eq:proof-thm-algo2:DS-2}
    \end{align}
    Similarly, \(\tilde{V}\trans\Delta \tilde{R}_h\trans \Delta \tilde{R}_h\tilde{V}\) satisfies
    \begin{equation} \label{eq:proof-thm-algo2:DS-3}
        \abs{\tilde{V}\trans\Delta \tilde{R}_h\trans \Delta \tilde{R}_h\tilde{V}}
        \leq \biggl(\frac{(\macheps + (1+\macheps)\epsSVD) \kappa(B_R)}{1-(\macheps + (1+\macheps)\epsSVD) \kappa(B_R)}\biggr)^2 (\hat{\Sigma})_{ii} (\hat{\Sigma})_{jj}.
    \end{equation}
    Combining~\eqref{eq:proof-thm-algo2:DS} with the bounds~\eqref{eq:proof-thm-algo2:DS-1}, \eqref{eq:proof-thm-algo2:DS-2}, and~\eqref{eq:proof-thm-algo2:DS-3}, we prove~\eqref{eq:thm-algo2:eig-2} using the assumption~\eqref{eq:thm-algo2:assump}.
\end{proof}

In Theorem~\ref{thm:algo2}, we established the backward stability of Algorithm~\ref{alg:eigen}. Next, we will show that the analysis of Algorithm~\ref{alg:eigen} satisfies the assumptions~\eqref{eq:eig}--\eqref{eq:sqrt} stated in Lemma~\ref{lem:algo2}.

\begin{lemma} \label{lem:algo2}
    Assume that Algorithm~\ref{alg:eigen} satisfies~\eqref{eq:chol}--\eqref{eq:svd} with given \(\epscholh\) and \(\epsSVD\).
    If \(2(1+\epsmh)(\macheps + (1+\macheps)\epsSVD) \kappa(B) \leq \frac{1}{2}\) and
    \begin{equation} \label{eq:lem:assump}
        \biggl(\frac{(1+n\epsmh)2(n+2)\epscholh+4\epsmh}{1-2(n+3)\epscholh+4\epsmh} + n\epsmh\biggr) \kappa^2(B)\leq \frac{1}{2},
    \end{equation}
    then the results \(\hat{\Sigma}\) and \(\tilde{V}\) of Algorithm~\ref{alg:eigen} satisfy~\eqref{eq:eig}--\eqref{eq:sqrt} with \(\tilde{V}_h=\tilde{V}\) and \(\Sigma_h=\hat{\Sigma}\), where \(\epssqrt = 0\),
    \begin{equation*}
        \epseigmh = 2n(1+n\epsmh)\epscholh,\quad\text{and}\quad
        \epseigh = 12(1+n\epsmh)(\macheps + (1+\macheps)\epsSVD) \kappa(B).
    \end{equation*}
\end{lemma}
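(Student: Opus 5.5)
The plan is to start from Theorem~\ref{thm:algo2} and convert its bounds, which are expressed in terms of \(\hat{R}_h = B_R D_R\), into bounds in terms of \(A = BD\). Theorem~\ref{thm:algo2} already provides
\[
\tilde{V}\trans(\hat{M}_h+\Delta\hat{M}_h)\tilde{V} = \hat{\Sigma}^2 + \Delta S .
\]
This is exactly the structure required by~\eqref{eq:eig}--\eqref{eq:Sh=Sigma2} with \(\tilde{V}_h=\tilde{V}\), \(\hat{S}_h = \hat{\Sigma}^2\) and \(\tilde{\Sigma}_h=\hat{\Sigma}\). Since \(\hat{\Sigma}\) is returned directly and no square root is taken, \eqref{eq:sqrt} holds with \(\epssqrt=0\). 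What remains is to relate \(D_R\) to \(D\) and \(\kappa(B_R)\) to \(\kappa(B)\).

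\textbf{Step 1: comparing \(D_R\) and \(D\).} The squared column norms of \(\hat{R}_h\) are the diagonal entries of \(\hat{R}_h\trans\hat{R}_h = \hat{M}_h+\Delta\hat{M}_h\). By~\eqref{eq:ATA},
\[
(\hat{M}_h)_{ii} = D_{ii}^2(1+\theta_i), \qquad \abs{\theta_i}\leq\epsmh .
\]
By~\eqref{eq:chol}, \(\abs{\Delta\hat{M}_h}_{ii}\leq \epscholh (D_R)_{ii}^2\). Together these give
\[
(D_R)_{ii}^2(1\pm\epscholh) = D_{ii}^2(1\pm\epsmh),
\]
so \(D_R\) and \(D\) agree up to factors of the form \(1+O(\epsmh+\epscholh)\).

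\textbf{Step 2: the bound~\eqref{eq:eig}.} Write
\[
D^{-1}\Delta\hat{M}_hD^{-1} = (D^{-1}D_R)\,\bigl(D_R^{-1}\Delta\hat{M}_hD_R^{-1}\bigr)\,(D_RD^{-1}).
\]
Bound the middle factor by~\eqref{eq:thm-algo2:eig}, which gives \(n\epscholh\norm{B_R}^2\). Then use \(\norm{B_R}^2\leq\normF{B_R}^2 = n\) together with \(\norm{B}\geq 1\) (its columns have unit norm), so that \(n\leq n\norm{B}^2\). Absorbing the ratio \(D_R/D\) from Step~1 into the constant yields \(\epseigmh = 2n(1+n\epsmh)\epscholh\); the factor \(2\) is extra slack.

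\textbf{Step 3: the bound~\eqref{eq:eig-2} (main obstacle).} This requires bounding \(\kappa(B_R)\) by \(\kappa(B)\). The first attempt is the congruence
\[
B_R\trans B_R = D_R^{-1}\bigl(D B\trans B D + \Delta M + \Delta\hat{M}_h\bigr)D_R^{-1}
= (DD_R^{-1})\bigl(B\trans B + E\bigr)(DD_R^{-1}),
\]
where \(E = D^{-1}(\Delta M+\Delta\hat{M}_h)D^{-1}\) satisfies
\[
\norm{E}\leq n\epsmh + (\text{a Cholesky term}).
\]
The Cholesky term involves \(\abs{\hat R_h}\trans\abs{\hat R_h}\), which is implicitly bounded through \(\hat{M}_h\) itself. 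Solving that implicit inequality is what should produce the fraction in~\eqref{eq:lem:assump}, with denominator \(1-2(n+3)\epscholh+\dots\).

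Because \(\lambda_{\min}(B\trans B)=1/\norm{B\pinv}^2\), assumption~\eqref{eq:lem:assump} guarantees \(\norm{E}\leq \tfrac12\lambda_{\min}(B\trans B)\). Weyl's inequality then gives \(\lambda_{\min}(B\trans B+E)\geq\tfrac12\lambda_{\min}(B\trans B)\), and likewise the largest eigenvalue grows by at most a factor \(3/2\). The diagonal scaling \(DD_R^{-1}\) is within \(1+O(\epsmh)\) of the identity, so \(\kappa(B_R)\leq 2(1+n\epsmh)\kappa(B)\) after some slack. The algebra of this implicit step is where I expect the difficulty to lie.

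With this bound, the hypothesis \(2(1+\epsmh)(\macheps+(1+\macheps)\epsSVD)\kappa(B)\leq\tfrac12\) implies~\eqref{eq:thm-algo2:assump}. Theorem~\ref{thm:algo2} then applies, and~\eqref{eq:thm-algo2:eig-2} with \(6\cdot 2(1+n\epsmh)\kappa(B)\) gives
\[
\epseigh = 12(1+n\epsmh)\bigl(\macheps+(1+\macheps)\epsSVD\bigr)\kappa(B).
\]
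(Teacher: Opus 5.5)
Your overall strategy is sound and differs from the paper's. As written, however, Step~2 does not give the stated constant, and Step~3 leaves its key estimate open.

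\textbf{Step 2.} Inequality \eqref{eq:thm-algo2:eig} already contains one factor $n$, coming from $\norm{\abs{B_R}}^2\leq n\norm{B_R}^2$. Your bound $\norm{B_R}^2\leq\normF{B_R}^2=n$ adds a second one, so you obtain about $n^2\epscholh$. That is larger than $2n(1+n\epsmh)\epscholh$ once $n\geq 3$, and the factor $2$ cannot absorb it.

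\textbf{Repair.} Both problems close with a single estimate taken directly from \eqref{eq:chol} rather than from Theorem~\ref{thm:algo2}. With $S=DD_R^{-1}$,
\[
\abs{D^{-1}\Delta\hat{M}_hD^{-1}}\leq\epscholh\,S^{-1}\abs{B_R}\trans\abs{B_R}S^{-1},\qquad \norm{\abs{B_R}}^2\leq\normF{B_R}^2=n,
\]
so $\norm{D^{-1}\Delta\hat{M}_hD^{-1}}\leq n\epscholh(1+\epsmh)/(1-\epscholh)$. Together with $\norm{B}\geq 1$, this gives \eqref{eq:eig} with the claimed $\epseigmh$.

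\textbf{Step 3.} The same estimate makes your $E$ explicit, $\norm{E}\leq n\epsmh+n\epscholh(1+\epsmh)/(1-\epscholh)$, so there is no implicit inequality to solve. This quantity is dominated by the bracket in \eqref{eq:lem:assump}, whose Cholesky part is about $2(n+2)\epscholh$. Hence \eqref{eq:lem:assump} and $\norm{B}\geq 1$ give $\norm{E}\leq\frac12\lambda_{\min}(B\trans B)$. Your Weyl-plus-congruence argument then yields $\kappa(B_R)\leq\sqrt3\,\kappa(S)\,\kappa(B)$ with $\kappa(S)^2\leq\frac{(1+\epsmh)(1+\epscholh)}{(1-\epsmh)(1-\epscholh)}$.

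Keep this sharper form when you check \eqref{eq:thm-algo2:assump}. The lemma's first hypothesis carries $2(1+\epsmh)$, not $2(1+n\epsmh)$. Your rounded bound $2(1+n\epsmh)\kappa(B)$ therefore does not by itself imply \eqref{eq:thm-algo2:assump} for $n>1$, whereas $\sqrt3\,\kappa(S)\leq 2(1+\epsmh)$ does.

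\textbf{Comparison with the paper.} The fraction in \eqref{eq:lem:assump} comes from the paper's route, not yours. The paper:
\begin{enumerate}
\item writes $D_R=(I+\Delta D)D$ additively;
\item collects all errors into a perturbation $\Delta B_R$ of $B_R\trans B_R$ with $\norm{\Delta B_R}\leq c\norm{B_R}^2$, using \eqref{eq:thm-algo2:eig} for the Cholesky part;
\item applies Weyl to both extreme eigenvalues and solves $(1-c)\norm{B_R}^2\leq(1+n\epsmh)\norm{B}^2$, giving $\norm{B_R}^2\leq 2(1+n\epsmh)\norm{B}^2$ (this is the implicit step you anticipated);
\item obtains $\epseigmh$ by inserting that bound into \eqref{eq:thm-algo2:eig}.
\end{enumerate}
Your multiplicative congruence $B_R\trans B_R=S(B\trans B+E)S$ needs no implicit solve. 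It also correctly separates $D^{-1}\Delta\hat{M}_hD^{-1}$, which \eqref{eq:eig} requires, from $D_R^{-1}\Delta\hat{M}_hD_R^{-1}$, which \eqref{eq:thm-algo2:eig} bounds and which the paper substitutes without comment. Your Step~1 likewise correctly compares squared column norms. In exchange, the paper's route uses Theorem~\ref{thm:algo2} as a black box and yields the hypothesis in exactly the stated form.
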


\begin{proof}
    Substituting \(\hat{M}_h\) in~\eqref{eq:chol} by~\eqref{eq:ATA}, we have
    \begin{equation} \label{eq:proof-lem-algo2:A-hatR}
        A\trans A + \Delta M + \Delta \hat{M}_h = \hat{R}_h\trans \hat{R}_h.
    \end{equation}
    Noticing that \(D_{ii} = (A\trans A)_{ii}\) and \((D_R)_{ii} = (\hat{R}_h\trans \hat{R}_h)_{ii}\), then \(D_{ii} + (\Delta M)_{ii} + (\Delta \hat{M}_h)_{ii} = (D_R)_{ii}\).
    Together with the bounds of \(\Delta M\) and  \(\Delta \hat{M}_h\) shown in~\eqref{eq:ATA} and~\eqref{eq:chol}, respectively, we have
    \begin{equation*}
        (1-\epsmh)D_{ii}
        \leq D_{ii} + (\Delta M)_{ii} = (D_R)_{ii} - (\Delta \hat{M}_h)_{ii}
        \leq (1+\epscholh)(D_R)_{ii},
    \end{equation*}
    which gives
    \begin{equation*}
        D_{ii}\leq \frac{1+\epscholh}{1-\epsmh}(D_R)_{ii}\quad\text{and similarly}
        \quad (D_R)_{ii}\leq \frac{1+\epsmh}{1-\epscholh} D_{ii}.
    \end{equation*}
    Noticing that \(D\) and \(D_R\) are diagonal matrix, then there exists a diagonal matrix \(\Delta D\) such that
    \begin{equation*}
        D_R=(I+\Delta D)D, \quad \abs{\Delta D}_{ii}\leq \frac{\epscholh+\epsmh}{1-\epscholh}.
    \end{equation*}
    Furthermore, by multiplicating \(D^{-1}\) from both right and left  sides of~\eqref{eq:proof-lem-algo2:A-hatR}, we can write~\eqref{eq:proof-lem-algo2:A-hatR} as
    \begin{equation} \label{eq:proof-lem-algo2:B-BR}
    \begin{split}
        &B\trans B + D^{-1}\Delta M D^{-1} \\
        &= (I+\Delta D)B_R\trans B_R(I+\Delta D) - (I+\Delta D)D_R^{-1}\Delta\hat{M}_hD_R^{-1}(I+\Delta D) \\
        &= \underbrace{\Delta DB_R\trans B_R + B_R\trans B_R\Delta D + \Delta DB_R\trans B_R\Delta D - (I+\Delta D)D_R^{-1}\Delta\hat{M}_hD_R^{-1}(I+\Delta D)}_{=: \Delta B_R} \\
        &\quad + B_R\trans B_R,
    \end{split}
    \end{equation}
    where, using~\eqref{eq:thm-algo2:eig}, \(\norm{\Delta B_R}\) can be bounded by
    \begin{equation*}
    \begin{split}
        \norm{\Delta B_R} &\leq 2\frac{\epscholh+\epsmh}{1-\epscholh}\norm{B_R}^2
        + \Bigl(\frac{\epscholh+\epsmh}{1-\epscholh}\Bigr)^2\norm{B_R}^2
        + \Bigl(\frac{1+\epsmh}{1-\epscholh}\Bigr)^2n\epscholh\norm{B_R}^2\\
        &\leq \biggl(\frac{2(n+2)\epscholh+4\epsmh}{1-2\epscholh}\biggr) \norm{B_R}^2.
    \end{split}
    \end{equation*}
    Using the perturbation theory on eigenvalues~\cite[Theorem~8.1.5]{GV2013} and~\eqref{eq:ATA}, we have
    \begin{equation*}
    \begin{split}
        \lambda_{\max}(B_R\trans B_R) - \biggl(\frac{2(n+2)\epscholh+4\epsmh}{1-2\epscholh}\biggr) \norm{B_R}^2
        &\leq \lambda_{\max}(B\trans B) + n\epsmh\norm{B}^2, \\
        \lambda_{\min}(B_R\trans B_R) + \biggl(\frac{2(n+2)\epscholh+4\epsmh}{1-2\epscholh}\biggr) \norm{B_R}^2
        &\geq \lambda_{\min}(B\trans B) - n\epsmh\norm{B}^2.
    \end{split}
    \end{equation*}
    Note that \(\lambda_{\max}(B_R\trans B_R) = \norm{B_R}^2\) and \(\lambda_{\max}(B\trans B) = \norm{B}^2\). 
    Together with~\eqref{eq:ATA} and the assumption~\eqref{eq:lem:assump}, \(\norm{B_R}\) can be bounded by
    \begin{equation} \label{eq:proof-lem-algo2:maxev}
        \norm{B_R}^2\leq \frac{1+n\epsmh}{1-\frac{2(n+2)\epscholh+4\epsmh}{1-2\epscholh}}\norm{B}^2
        \leq 2(1+n\epsmh)\norm{B}^2
    \end{equation}
    and further
    \begin{equation} \label{eq:proof-lem-algo2:minev}
        \lambda_n(B_R\trans B_R)\geq \lambda_n(B\trans B) - \biggl(\frac{(1+n\epsmh)2(n+2)\epscholh+4\epsmh}{1-2(n+3)\epscholh+4\epsmh} + n\epsmh\biggr) \norm{B}^2.
    \end{equation}
    Combining~\eqref{eq:proof-lem-algo2:maxev} with~\eqref{eq:proof-lem-algo2:minev} and the assumption~\eqref{eq:lem:assump}, we have
    \begin{equation} \label{eq:proof-lem-algo2:kappaBR}
    \begin{split}
        \kappa(B_R) &= \sqrt{\frac{\norm{B_R}^2}{\lambda_n(B_R\trans B_R)}}
        \leq \sqrt{4(1+n\epsmh)\kappa^2(B)} 
        \leq 2(1+n\epsmh)\kappa(B).
    \end{split}
    \end{equation}
    Substituting \(\norm{B_R}\) and \(\kappa(B_R)\) in Theorem~\ref{thm:algo2} by~\eqref{eq:proof-lem-algo2:maxev} and~\eqref{eq:proof-lem-algo2:kappaBR}, we conclude the proof.
\end{proof}

\begin{remark}
    According to~\cite{H2002} and~\cite{DV2008a}, \(\epsmh = \bigO(\machepsh)\), \(\epscholh = \bigO(\machepsh)\), \(\epsSVD = \bigO(\macheps)\), \(\epsV = \bigO(\macheps)\), and~\(\epsU = \bigO(\macheps)\).
    Based on Lemma~\ref{lem:algo2} and Theorem~\ref{thm:algo1}, we have the following conclusions for Algorithm~\ref{alg:mpthinsvd} using Algorithm~\ref{alg:eigen} in~Line~\ref{line:eigen}:
    \begin{equation*}
        \hat{U}\hat{\Sigma}\tilde{V}_h\trans = A+\Delta A, \quad \norm{\Delta A(i, :)}\leq \bigO(\macheps)\norm{A(i, :)}.
    \end{equation*}
    If it further holds that
    \begin{equation*}
        \bigO(\macheps)\kappa(B) \leq \frac{1}{2}\quad\text{and}\quad
        \bigO(\machepsh)\kappa^2(B) \leq \frac{1}{2},
    \end{equation*}
    then
    \begin{equation*}
        \frac{\abs{\hat{\sigma}_i-\sigma_i(A)}}{\sigma_i(A)}
        \leq \bigO(\macheps)\kappa(B) + \bigO(\machepsh)\kappa^2(B)
    \end{equation*}
    and
    \begin{equation*} 
    \normF{\hat{U}\trans \hat{U}-I}
    \leq \bigO(\macheps)\kappa(B) + \bigO(\machepsh)\kappa^2(B).
    \end{equation*}
\end{remark}

\subsection{Improved bound for the Cholesky QR algorithm}
As discussed in Section~\ref{sec:introduction}, Algorithm~\ref{alg:mpthinsvd} is motivated by the mixed precision Cholesky QR algorithm, which includes the following three steps:
\begin{enumerate}
    \item Compute the Gram matrix \(M = A\trans A\) at the precision \(\machepsh\).
    \item Compute the Cholesky factorization of \(M\) satisfying \(M = R\trans R\) at the precision \(\machepsh\).
    \item Compute \(Q\) by solving linear systems \(QR = A\) at the precision \(\macheps\).
\end{enumerate}
In~\cite[Theorem 3.2]{YTD2015}, for the QR factorization problem \(A = QR\), the authors analyzed the loss of orthogonality of the resulting orthonormal matrix \(\hat{Q}\) produced by the Cholesky QR algorithm, which is related to \(\kappa(A)\).
Inspired by the analysis of thin SVD algorithms, we now derive a sharper bound on this loss of orthogonality, as stated in the following theorem.

\begin{theorem}
    Let \(A = BD\), where the columns of \(B\) have unit norms and \(D\) is a diagonal matrix containing the column norms of \(A\).
    Assume that \(\hat{Q}\) is the \(Q\)-factor of \(A\) computed by the mixed precision Cholesky QR algorithm.
    
    If \(\machepsh=\bigO(\macheps^2)\) and \(\bigO(\macheps)\kappa(B)\leq 1\), then
    \begin{equation*}
        \norm{\hat{Q}\trans\hat{Q} - I}\leq \bigO(\macheps)\kappa(B).
    \end{equation*}
    
    If \(\machepsh=\macheps\) and \(\bigO(\macheps)\kappa^2(B)\leq 1\), then
    \begin{equation*}
        \norm{\hat{Q}\trans\hat{Q} - I}\leq \bigO(\macheps)\kappa^2(B).
    \end{equation*}
\end{theorem}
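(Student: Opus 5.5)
We plan to follow the same template as the proof of Theorem~\ref{thm:algo1} and Lemma~\ref{lem:algo2}: first control the computed Cholesky factor through the column-scaled matrix \(B_R\), and then treat the triangular solve as a columnwise backward-stable perturbation of \(\hat R\). Write the three steps as
\[
\hat M_h = A\trans A+\Delta M,\qquad \hat M_h+\Delta\hat M_h=\hat R\trans\hat R,\qquad \hat Q(\hat R+\Delta R_i)=A(i,:)\ \text{rowwise},
\]
with \(\abs{\Delta M}\le\epsmh\abs{A}\trans\abs{A}\), \(\abs{\Delta\hat M_h}\le\epscholh\abs{\hat R}\trans\abs{\hat R}\), and \(\abs{\Delta R_i}\le\bigO(\macheps)\abs{\hat R}\). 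Here \(\epsmh\) and \(\epscholh\) are \(\bigO(\machepsh)\). We also assume that rounding \(\hat R_h\) to working precision adds an \(\bigO(\macheps)\abs{\hat R}\) term, which we absorb into \(\Delta R_i\).

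\textbf{Step 1.} Set \(\hat R=B_RD_R\) and reuse the argument in the proof of Lemma~\ref{lem:algo2} essentially verbatim. It gives \(D_R=(I+\Delta D)D\) with \(\abs{\Delta D}=\bigO(\machepsh)\). It also gives
\[
B_R\trans B_R=B\trans B+E,\qquad \norm{E}\le\bigO(\machepsh)\norm{B}^2 .
\]
This yields \(\kappa(B_R)\le 2\kappa(B)\) as soon as \(\bigO(\machepsh)\kappa^2(B)\le 1\). In the first case this holds because \(\machepsh=\bigO(\macheps^2)\) and \(\bigO(\macheps)\kappa(B)\le1\). In the second case it is exactly the hypothesis.

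\textbf{Step 2.} Write \(\hat Q = A\hat R^{-1}+F\), the rowwise analogue of~\eqref{eq:AV-1}. Then
\[
\hat Q\trans\hat Q-I=\bigl(\hat R\itrans A\trans A\hat R^{-1}-I\bigr)+\hat R\itrans A\trans F+F\trans A\hat R^{-1}+F\trans F .
\]
The first term equals \(-\hat R\itrans(\Delta M+\Delta\hat M_h)\hat R^{-1}\). Scaling by \(D_R\), we bound it by \(\bigO(\machepsh)\,\norm{B_R^{-1}}^2\norm{\abs{B}\trans\abs{B}+\abs{B_R}\trans\abs{B_R}}\), which is \(\bigO(n\machepsh)\kappa^2(B)\). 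Here we use that the columns of \(B,B_R\) have unit norm, so \(\norm{B},\norm{B_R}\le\sqrt n\).

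For the cross terms, we first express \(F\) via the standard rowwise formula
\[
F(i,:)=-A(i,:)\hat R^{-1}\Delta R_i(I+\hat R^{-1}\Delta R_i)^{-1}\hat R^{-1}.
\]
The key estimate is then
\[
\norm{\hat R^{-1}\abs{\Delta R_i}}\le\bigO(\macheps)\norm{B_R^{-1}}\,\norm{\abs{B_R}}\le\bigO(\macheps)\kappa(B_R),
\]
which follows because \(\abs{\Delta R_i}\le\bigO(\macheps)\abs{B_R}D_R\) and \(D_R\) cancels against \(D_R^{-1}\) inside \(\hat R^{-1}=D_R^{-1}B_R^{-1}\). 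Since \(\norm{A\hat R^{-1}}\le 1+\bigO(\machepsh)\kappa^2(B)\) by the first term, we get \(\norm{F}\le\bigO(\macheps)\kappa(B)\). The cross terms are therefore \(\bigO(\macheps)\kappa(B)\), and the quadratic term \(F\trans F\) is dominated under the hypothesis \(\bigO(\macheps)\kappa(B)\le1\).

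\textbf{Step 3.} Collecting terms gives \(\norm{\hat Q\trans\hat Q-I}\le\bigO(\macheps)\kappa(B)+\bigO(\machepsh)\kappa^2(B)\). If \(\machepsh=\bigO(\macheps^2)\), the second term is \(\bigO(\macheps)\kappa(B)\cdot\bigO(\macheps)\kappa(B)\le\bigO(\macheps)\kappa(B)\), which proves the first claim. If \(\machepsh=\macheps\), the bound is \(\bigO(\macheps)\kappa^2(B)\), since \(\kappa(B)\ge1\); this proves the second claim.

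\textbf{Main obstacle.} The hard step is the \(F\) estimate. We must make sure the perturbation \(\Delta R_i\) from the triangular solve is controlled columnwise relative to \(\hat R\), so that the diagonal scaling \(D_R\) cancels and only \(\kappa(B_R)\) appears. A normwise bound would reintroduce \(\kappa(A)\). If the componentwise bound for the solve proves awkward, we will fall back on the columnwise form \(\norm{\Delta R_i(:,j)}\le\bigO(\macheps)\norm{\hat R(:,j)}\). This is the same form used in~\eqref{eq:svd} and Theorem~\ref{thm:algo2}, and it suffices for the cancellation.
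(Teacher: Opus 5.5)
Your overall route (relate \(B_R\) to \(B\), then split \(\hat Q\trans\hat Q-I\) into the Gram/Cholesky term, the cross terms and \(F\trans F\)) is viable and differs from the paper's. However, the step you single out as the key one is stated for the wrong product, and as written it is false. With \(\hat R^{-1}=D_R^{-1}B_R^{-1}\) and \(\abs{\Delta R_i}\le\bigO(\macheps)\abs{B_R}D_R\), one has \(\hat R^{-1}\Delta R_i=D_R^{-1}\bigl(B_R^{-1}\Delta R_iD_R^{-1}\bigr)D_R\). The factors \(D_R^{-1}\) and \(D_R\) sit at opposite ends, so this is only a diagonal similarity of a small matrix, and nothing cancels. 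For instance, \(\hat R=\begin{pmatrix}\delta&1\\0&1\end{pmatrix}\) has \(\kappa(B_R)\approx 2.4\) for every \(\delta\). The perturbation \(\Delta R_i=\macheps\begin{pmatrix}\delta&1\\0&-1\end{pmatrix}\) satisfies \(\abs{\Delta R_i}=\macheps\abs{\hat R}\). Yet \(\hat R^{-1}\Delta R_i=\macheps\begin{pmatrix}1&2/\delta\\0&-1\end{pmatrix}\) has norm about \(2\macheps/\delta\), i.e.\ of order \(\macheps\kappa(D_R)\). So neither your bound on \(\hat R^{-1}\Delta R_i\) nor a Neumann-series bound for the factor \((I+\hat R^{-1}\Delta R_i)^{-1}\) in your formula for \(F(i,:)\) is available. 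This fails exactly in the regime \(\kappa(D)\gg\kappa(B)\) that the theorem is about. Your columnwise fallback has the same problem, since it also only controls \(\Delta R_iD_R^{-1}\).

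The repair is to move \(\hat R^{-1}\) across using \((I+\hat R^{-1}\Delta R_i)^{-1}\hat R^{-1}=\hat R^{-1}(I+\Delta R_i\hat R^{-1})^{-1}\). This gives \(F(i,:)=-(A\hat R^{-1})(i,:)\,W_i(I+W_i)^{-1}\) with \(W_i=\Delta R_i\hat R^{-1}=(\Delta R_iD_R^{-1})B_R^{-1}\). Here the scaling genuinely cancels: \(\norm{W_i}\le\bigO(\macheps)\norm{\abs{B_R}}\,\norm{B_R^{-1}}\le\bigO(\macheps)\sqrt{n}\,\kappa(B_R)\), since \(\norm{B_R}\ge1\). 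In the example, \(W_i=\macheps\diag(1,-1)\). Summing over rows then gives \(\norm{F}\le\sqrt{n}\,\norm{A\hat R^{-1}}\max_i\norm{W_i(I+W_i)^{-1}}=\bigO(\macheps)\kappa(B)\), and with this change the rest of your argument goes through.

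The paper avoids the issue altogether. It notes that the error-model relations for Cholesky QR on \(A=BD\), after multiplying by \(D^{-1}\) (with \(\hat R_h^{(B)}=\hat R_hD^{-1}\) and \(\Delta R_i^{(B)}=\Delta R_iD^{-1}\)), are exactly those for Cholesky QR on \(B\) with the same \(\hat Q\). It then quotes the known bounds \cite{YNYF2015,YTD2015} with \(\kappa(B)\) in place of \(\kappa(A)\). Your direct analysis is longer, since it needs Step 1 to compare \(B_R\) with \(B\), but it is self-contained and gives explicit constants. The paper's reduction is shorter and makes the column-scaling invariance transparent, at the price of relying on the cited analyses.
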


\begin{proof}
    According to the standard rounding error analysis, the Cholesky QR algorithm satisfies~\eqref{eq:ATA},
    \begin{equation} \label{eq:qr:chol}
         \hat{M}_h + \Delta \hat{M}_h = \hat{R}_h\trans \hat{R}_h, \quad \abs{\Delta \hat{M}_h}\leq \epscholh\abs{\hat{R}_h\trans}\abs{\hat{R}_h},
    \end{equation}
    and
    \begin{equation*}
        \hat{Q}(i, :)\bigl(\hat{R}_h + \Delta R_i\bigr) = A(i, :), \quad
        \abs{\Delta R_i}\leq \epstrsm\abs{\hat{R}_h},
    \end{equation*}
    which can be further written as, using \(A = BD\),
    \begin{alignat*}{2}
        &D\trans\hat{M}^{(B)}_h D = D\trans B\trans BD + D\trans \Delta M^{(B)}D, \quad &&\abs{\Delta M^{(B)}} \leq \epsmh\abs{B}\trans\abs{B}, \\
        &D\trans\hat{M}_h^{(B)}D + D\trans\Delta \hat{M}_h^{(B)}D = D\trans(\hat{R}_h^{(B)})\trans \hat{R}_h^{(B)}D, \quad &&\abs{\Delta \hat{M}_h^{(B)}}\leq \epscholh\abs{(\hat{R}_h^{(B)})\trans}\abs{\hat{R}_h^{(B)}}, \\
        &\hat{Q}(i, :)\bigl(\hat{R}_h^{(B)}D + \Delta R_i^{(B)}D\bigr) = B(i, :)D, \quad
        &&\abs{\Delta R_i^{(B)}}\leq \epstrsm\abs{\hat{R}_h^{(B)}}.
    \end{alignat*}
    By applying \(D\itrans\) and \(D^{-1}\) from the left and right sides, respectively, these equations can be written as
    \begin{alignat*}{2}
        &\hat{M}^{(B)}_h = B\trans B + \Delta M^{(B)}, \quad &&\abs{\Delta M^{(B)}} \leq \epsmh\abs{B}\trans\abs{B}, \\
        &\hat{M}_h^{(B)} + \Delta \hat{M}_h^{(B)} = (\hat{R}_h^{(B)})\trans \hat{R}_h^{(B)}, \quad &&\abs{\Delta \hat{M}_h^{(B)}}\leq \epscholh\abs{(\hat{R}_h^{(B)})\trans}\abs{\hat{R}_h^{(B)}}, \\
        &\hat{Q}(i, :)\bigl(\hat{R}_h^{(B)} + \Delta R_i^{(B)}\bigr) = B(i, :), \quad
        &&\abs{\Delta R_i^{(B)}}\leq \epstrsm\abs{\hat{R}_h^{(B)}}.
    \end{alignat*}
    This implies that the \(Q\)-factor of \(A\) produced by the Cholesky QR algorithm is identical to the \(Q\)-factor of \(B\) computed by the same procedure.
    Therefore, by applying the rounding error analysis for the Cholesky QR algorithm, as in~\cite{YNYF2015,YTD2015}, we draw the conclusion.
\end{proof}

\section{Numerical Experiments}
\label{sec:experiments}
In this section, we present numerical results for Algorithm~\ref{alg:mpthinsvd} to compute \(U\), \(\Sigma\), and \(V\) satisfying~\eqref{prob:svd}.
In our tests, the working precision is IEEE single precision and the higher precision is IEEE double precision.
Our MPI implementations use the \texttt{TSQR} routine provided by the Trilinos C++ library~\cite{trilinos2005}.
The experiments were carried out on the Karolina cluster,%
\footnote{https://docs.it4i.cz}
which consists of 720 nodes, each equipped with one 64-core CPU.
We use up to 256 nodes and employ one process
per node as the mode of parallelization.

\subsection{Tests on CPU}
On CPU, the following thin SVD variants are tested:
\begin{enumerate}
    \item {\bf mpthinSVD-Jacobi}: mixed precision thin SVD algorithm (Algorithm~\ref{alg:mpthinsvd}) with the one-sided Jacobi SVD algorithm (\texttt{DGEJSV}) as eigensolver in Line~\ref{line:eigen}.
    \item {\bf mpthinSVD-Algo 2}: mixed precision thin SVD algorithm (Algorithm~\ref{alg:mpthinsvd}) with Algorithm~\ref{alg:eigen} as eigensolver in Line~\ref{line:eigen}.
    \item {\bf Jacobi SVD (\texttt{sGEJSV})}: one-sided Jacobi SVD algorithm in LAPACK.
    \item {\bf QR SVD (\texttt{sGESVD})}: QR SVD algorithm in LAPACK.
    \item {\bf D\&C SVD (\texttt{sGESDD})}: D\&C SVD algorithm in LAPACK.
\end{enumerate}

Similar to~\cite{DV2008b,GMS2025}, the test matrices are randomly generated matrices of the form \(A = BD\), where \(B\) has columns of unit norm and \(D\) is diagonal.
We can specify the condition numbers of \(D\) and \(B\) in the following way.
First, we use \(\texttt{DLATM1}\) to generate two diagonal matrices, \(D\) and \(\Sigma\), with given condition numbers \(\kappa(D)\) and
\(\kappa(\Sigma)\), respectively.
Next, we form the matrix \(B\) as \(B = W_1 \Sigma W_2 W_3\), where
\(W_1\) and \(W_2\) are the unitary factors from the QR factorization of
random matrices, and \(W_3\) is selected appropriately so that
\(B\) has unit-norm columns~\cite{CL1983,DH2000}.
In our experiments, we test \(16\) combinations of \(\kappa(D)\) and
\(\kappa(B)\) as shown in Table~\ref{tab:testmat}.
The parameter \texttt{MODE} is used by \texttt{DLATM1} to specify different
types of test matrices; see Table~\ref{tab:XLATM1} for details.

\begin{table}[!tb]
\centering
\footnotesize
\caption{Description of different types of test matrices.}
\label{tab:testmat}
\begin{tabular}{|c|c|c|c|c|c|c|c|c|c|c|c|c|c|c|c|c|}
\hline
ID of matrices& 1& 2& 3& 4& 5& 6& 7& 8& 9& 10& 11&
12& 13& 14& 15& 16\\
\hline
\texttt{MODE} of \(D\)& 1& 1& 1& 1& 2& 2& 2& 3& 3& 3& 4&
4& 4& 5& 5& 5\\
\hline
\texttt{MODE} of \(\Sigma\)& 2& 3& 4& 5& 3& 4& 5& 2& 4& 5& 2&
3& 5& 2& 3& 4\\
\hline
\end{tabular}
\end{table}

\begin{table}[!tb]
\centering
\caption{Explanation of different modes of \texttt{XLATM1} for generating
\(D=\diag(d)\).}
\label{tab:XLATM1}
\begin{tabular}{cp{0.8\textwidth}}
\hline
\texttt{MODE} & \hfil Description\hfil \\
\hline
1 & Clustered at $1/\kappa$, where $d(1) = 1$, $d(2:n) = 1/\kappa$ \\
2 & Clustered at $1$, where $d(1:n-1) = 1$, $d(n) = 1/\kappa$ \\
3 & Geometric distribution, where $d(i) = 1/\kappa^{(i-1)/(n-1)}$ \\
4 & Arithmetic distribution, where $d(i) = 1 - (i-1)/(n-1)(1 - 1/\kappa)$ \\
5 & Log-random distribution, where $d(i)\in(1/\kappa,1)$ so that their logarithms are random and uniformly
distributed \\
\hline
\end{tabular}
\end{table}

\subsubsection{Tests for accuracy}
In Figure~\ref{fig:accuracy}, we consider five different scenarios for \(\kappa(B)\): \(10\), \(10^2\), \(10^3\), \(10^4\), and \(10^5\).
For each value of \(\kappa(B)\), we fix \(n = 64\) and \(m = 16\cdot n\), and choose \(\kappa(D)\) from \(\{1, 10^2, 10^4, 10^6, 10^8\}\).
For every pair \((\kappa(D), \kappa(B))\), we evaluate the \(16\) configurations listed in Table~\ref{tab:testmat}.

Figure~\ref{fig:accuracy} presents the maximum relative error of the singular values,
\[
\max_{i\leq n}\frac{\abs{\hat{\sigma}_i-\sigma_i}}{\sigma_i},
\]
where \(\hat{\sigma}_i\) denotes the singular values computed by the different algorithms, and \(\sigma_i\) denotes the reference singular values obtained using the one-sided Jacobi SVD algorithm in double precision, i.e., \texttt{DGEJSV}. 
The figure clearly demonstrates that our mixed precision thin SVD algorithm achieves higher accuracy than both the QR SVD and the D\&C SVD algorithms, and achieves accuracy comparable to the Jacobi SVD algorithm, consistent with our theoretical prediction indicated by the blue dotted line.
As indicated in Remark~\ref{remark:eigen-jacobi}, when \(\kappa(B)\) is relatively small, the dominant term in the bound for the singular values is \(\bigO(\macheps)\), since \(\macheps < \machepsh\kappa^2(B)\).
Consequently, in the first two subfigures, the blue curve representing the relative error of mpthinSVD-Jacobi stays at about \(\bigO(\macheps)\) instead of \(\bigO(\macheps)\kappa(B)\), and it also outperforms the red curve for mpthinSVD-Algo 2, whose behavior is dominated by \(\bigO(\macheps)\kappa(B)\).
As \(\kappa(B)\) increases, the accuracy of the two algorithms becomes comparable.

\begin{figure}[!tb]
    \centering
    \includegraphics[width= \textwidth]{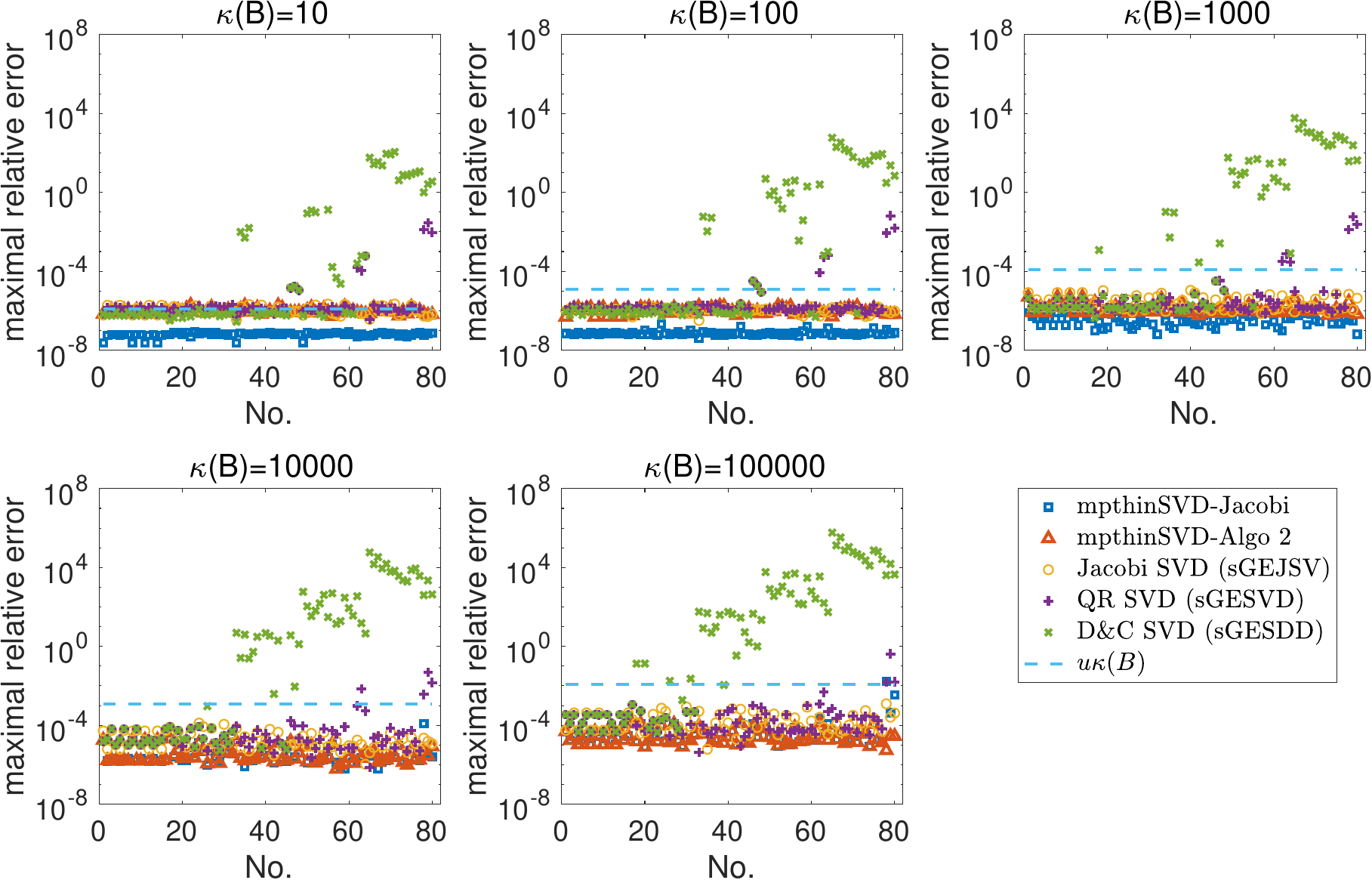}
    \caption{Accuracy comparison among different algorithms for computing SVD of tall-and-skinny matrices. In this figure, the results for \(80\) test matrices are presented, labeled as No.1 through No.80 along the \(x\)-axis.}
    \label{fig:accuracy}
\end{figure}

\subsubsection{Tests for performance on one CPU}
For performance tests, we consider random matrices of various dimensions.
In Figure~\ref{fig:performance}, the four subfigures correspond to fixed values \(n=16\), \(32\), \(64\), and \(128\), respectively.
For each choice of \(n\), we test four settings of \(m\), namely \(m = 32\cdot n\), \(256\cdot n\), \(2048\cdot n\), and \(16{,}384\cdot n\).
Each subfigure reports the relative run time, defined as the ratio of the wall clock time of a given algorithm over that of the Jacobi SVD algorithm.
We use the labels `mpthinSVD-Jacobi:Gram matrix'\slash`mpthinSVD-Algo 2:Gram matrix', `mpthinSVD-Jacobi:eigen'\slash`mpthinSVD-Algo 2:eigen', `mpthinSVD-Jacobi:compute U'\slash`mpthinSVD-Algo 2:compute U', and `mpthinSVD-Jacobi:overlap'\slash`mpthinSVD-Algo 2:overlap' to denote the different components of Algorithm~\ref{alg:mpthinsvd} when combined with the two eigensolvers (the one-sided Jacobi SVD algorithm or Algorithm~\ref{alg:eigen}), respectively: 
forming the Gram matrix,
computing the spectral decomposition of the Gram matrix,
computing \(U\),
and the remaining parts of the algorithm.

Figure~\ref{fig:performance} illustrates that mpthinSVD-Algo 2 is slightly faster than mpthinSVD-Jacobi, because Algorithm~\ref{alg:eigen} outperforms the one-sided Jacobi algorithm.
This performance gap widens as \(m\) increases, and when \(m/n=16{,}384\), Algorithm~\ref{alg:mpthinsvd} can be more than \(8\times\) faster than the QR-based algorithms.

\begin{figure}[!tb]
    \centering
    \includegraphics[width= \textwidth]{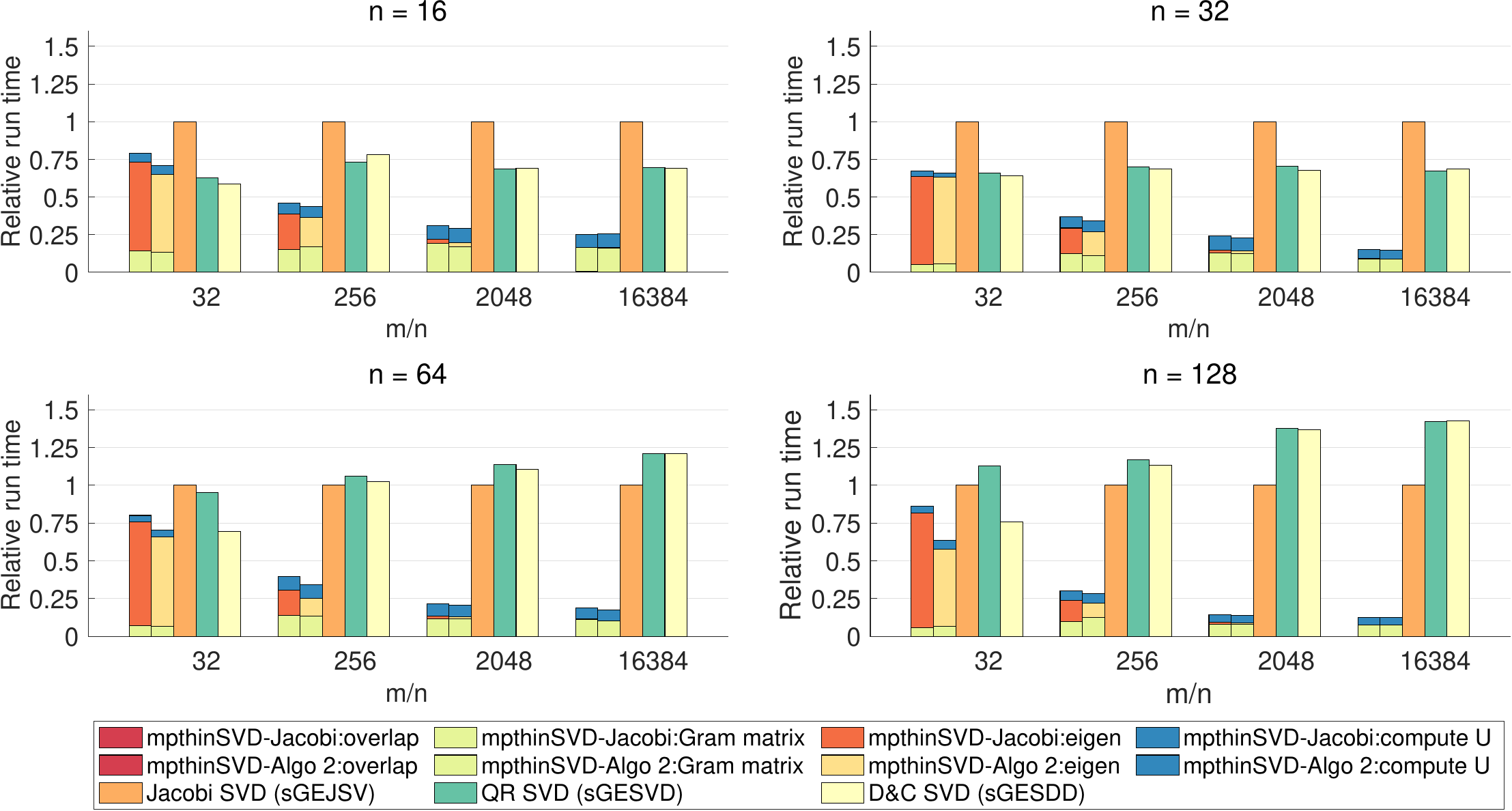}
    \caption{Performance comparison on CPU among different algorithms for computing SVD of tall-and-skinny random matrices. For each ratio \(m/n\), the five bars from
    left to right represent the result of mpthinSVD-Jacobi, mpthinSVD-Algo 2, Jacobi SVD (\texttt{sGEJSV}), QR SVD (\texttt{sGESVD}), and D\&C SVD (\texttt{sGESDD}), respectively.}
    \label{fig:performance}
\end{figure}

\subsection{Tests on the distributed memory system}
\subsubsection{MPI implementation}
\label{subsec:mpi}
Before presenting the experiments, we outline the MPI parallelization strategy for Algorithm~\ref{alg:mpthinsvd}.
We consider a setup with \(p\) nodes, each running a single MPI process, and adopt a 1D rowwise data distribution for both the input matrix \(A\in\mathbb{R}^{m\times n}\) and the left singular vectors \(U\in\mathbb{R}^{m\times n}\).
Furthermore, we assume that each process has sufficient memory to store matrices of size \(n\times n\) and \(\lceil m/p\rceil\times n\).
All processes hold local copies of the small matrices \(M_h\), \(\Sigma_h\), \(V_h\), \(\Sigma\), and \(V\).
Within Algorithm~\ref{alg:mpthinsvd}, the only operation that requires global communication is the matrix product of two tall-and-thin matrices, namely \(A_h\trans A_h\).
Each process first computes the product of its local matrix blocks, and the final result is then assembled by means of a single global synchronization step.
\subsubsection{Tests for performance}
Similarly to the performance tests on CPU, we also consider random matrices of various dimensions.
From the tests on CPU, when the matrix size becomes larger, the Gram-based and QR-based algorithms are dominated by the computational time to form the Gram matrix and compute the QR factorization, respectively.
Thus, we only compare the following variants:
\begin{enumerate}
    \item {\bf{mpthinSVD-Algo 2}}: mixed precision thin SVD algorithm (Algorithm~\ref{alg:mpthinsvd}) with Algorithm~\ref{alg:eigen} as eigensolver in Line~\ref{line:eigen}.
    The MPI implementation is shown in Section~\ref{subsec:mpi}.
    \item {\bf Jacobi SVD}: using \texttt{TSQR} to reduce the tall-and-skinny matrix to a small square matrix and then employing one-sided Jacobi SVD algorithm to solve the SVD of the small matrix, as discussed in Section~\ref{sec:introduction}.
\end{enumerate}

Figures~\ref{fig:performance-fixed-m} and~\ref{fig:performance-fixed-msub} report the speedups for the strong- and weak-scaling experiments, respectively, where the speedup is defined as the ratio between the wall clock time of the Jacobi SVD algorithm and that of the mpthinSVD-Algo 2 algorithm.
For a relatively small number of nodes, the runtime is mainly determined by the computational cost, data movement, and cache misses, leading to larger speedups.
As the node count increases, communication overhead, i.e., global synchronization, becomes dominant, and the speedup plateaus at roughly \(2\times\).
This is because our mixed-precision thin SVD algorithm requires only a single synchronization point, whereas \texttt{TSQR} may require additional synchronization overhead to form the \(Q\)-factor explicitly.

\begin{figure}[!tb]
    \centering
    \includegraphics[width= 0.6\textwidth]{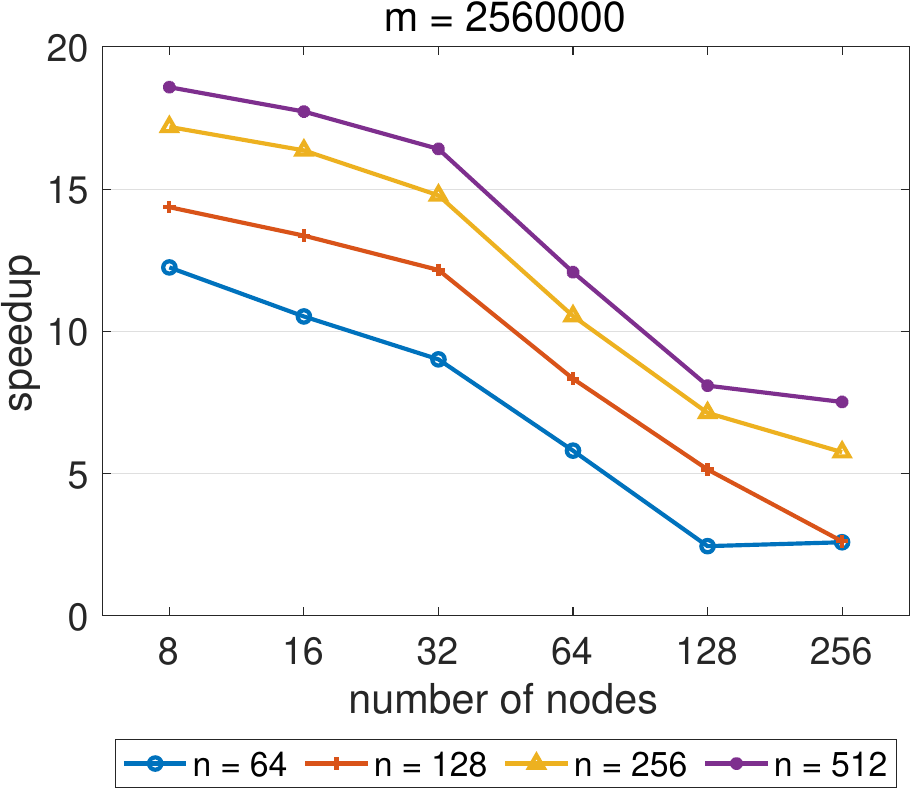}
    \caption{Performance comparison on distributed memory system between different algorithms for computing SVD of tall-and-skinny random matrices with fixed number of rows, i.e., \(m\).}
    \label{fig:performance-fixed-m}
\end{figure}

\begin{figure}[!tb]
    \centering
    \includegraphics[width= \textwidth]{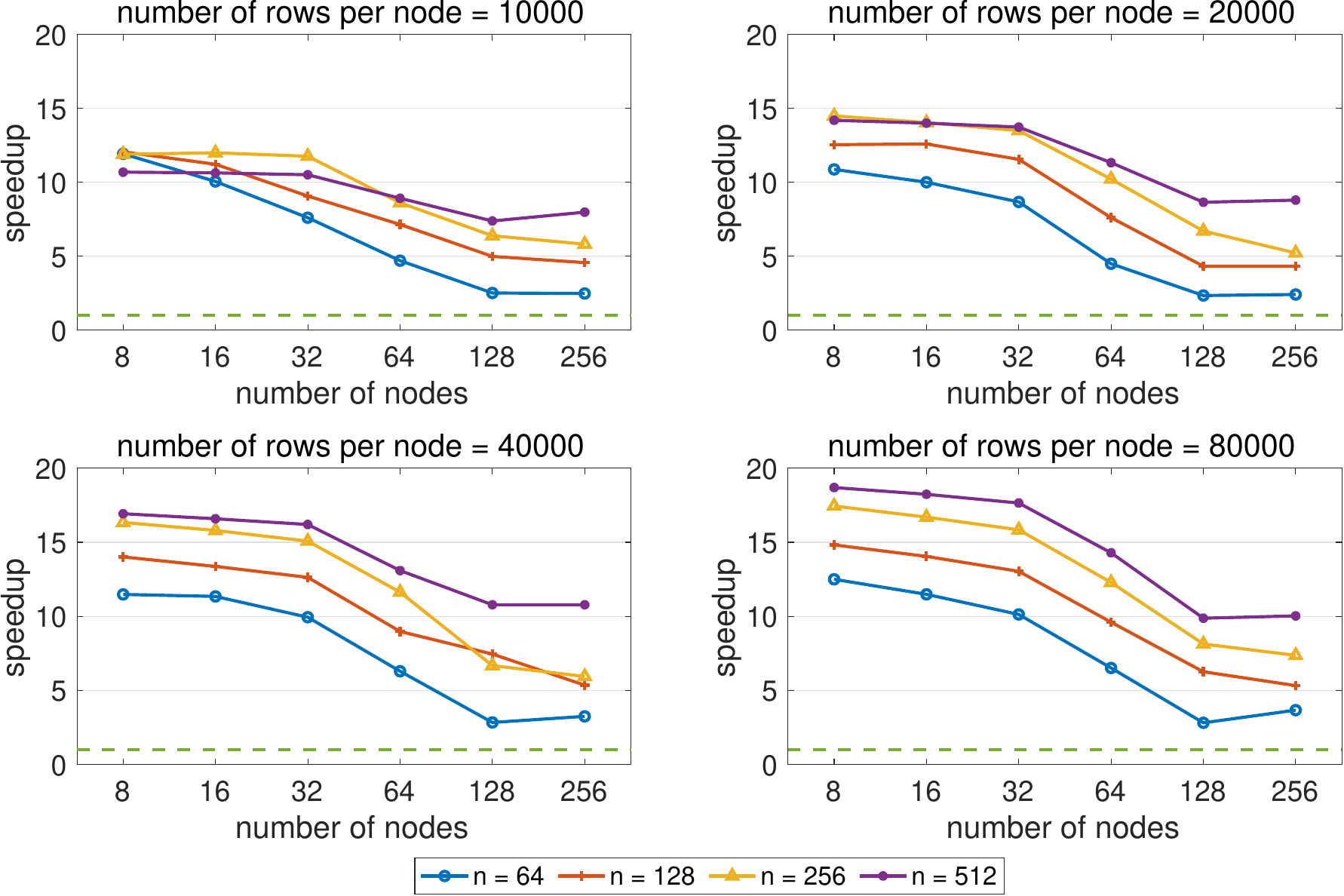}
    \caption{Performance comparison on distributed memory system between different algorithms for computing SVD of tall-and-skinny random matrices with fixed number of rows per node, i.e., \(m/p\), where \(p\) denotes the number of nodes.}
    \label{fig:performance-fixed-msub}
\end{figure}

\section{Conclusions}
\label{sec:conclusions}
In this paper, we introduce a mixed precision thin SVD algorithm based on the higher precision Gram matrix and the Jacobi algorithm, and we prove that the singular values produced by this algorithm achieve high relative accuracy.
Numerical experiments demonstrate that, on both shared memory and distributed memory systems, the mixed precision algorithm significantly outperforms the traditional QR-based thin SVD approaches.

We note that randomized QR-based thin SVD algorithms also exist, but in this work we restrict our attention to deterministic methods that achieve high relative accuracy.
In future work, it would be interesting to incorporate mixed precision techniques into these randomized thin SVD algorithms and compare their performance with our mixed precision algorithm.
\section*{Acknowledgments}
E.~Carson and Y.~Ma are supported by the European Union (ERC, inEXASCALE, 101075632).
Views and opinions expressed are those of the authors only and do not necessarily reflect those of the European Union or the European Research Council.
Neither the European Union nor the granting authority can be held responsible for them.
E.~Carson and Y.~Ma additionally acknowledge support from the Charles University Research Centre program No. UNCE/24/SCI/005.
M.~Shao is supported by the National Key R\&D Program of China under
Grant Number 2023YFB3001603.
We also acknowledge IT4Innovations National Supercomputing Center, Czech Republic, for awarding this project (Project FTA-25-81) access to the Karolina supercomputer, supported by the Ministry of Education, Youth and Sports of the Czech Republic through the e-INFRA CZ (grant ID: 90254).

\bibliographystyle{abbrvurl}
\bibliography{mybib}


\end{document}